\newtheorem{thm}{Theorem}
\newtheorem{lemma}[thm]{Lemma}
\newtheorem{question}{Question}
\newtheorem{case}{Case}
\theoremstyle{definition}
\newtheorem{remark}[thm]{Remark}
\theoremstyle{remark}
\newtheorem{claim}{Claim}
\def\Lip{\operatorname{Lip}}
\def\er{\mathbb R}
\def\setsep{:\;}
\def\en{\mathbb N}
\def\C{\mathcal{C}}
\def\F{\mathcal{F}}
\def\Norm{\|\cdot\|}
\def\Span{\mathrm{span}}
\def\dx{\, \mathrm{d}y}
\def\dy{\, \mathrm{d}x}
\begin{document}
\title{On the structure of Lipschitz-free spaces}
\author{Marek C\' uth}
\author{Michal Doucha}
\author{Przemys{\l}aw Wojtaszczyk}
\email{marek.cuth@gmail.com, m.doucha@post.cz, wojtaszczyk@icm.edu.pl}
\address[M.~C\' uth]{Charles University, Faculty of Mathematics and Physics, Department of Mathematical Analysis, Sokolovsk\'a 83, 186 75 Prague 8, Czech Republic}
\address[M.~C\' uth, M.~Doucha]{Instytut Matematyczny Polskiej Akademii Nauk, \' Sniadeckich 8, 00-656 Warszawa, Poland}
\address[P.~Wojtaszczyk]{{Interdisciplinary Centre for Mathematical and Computational Modelling, University of Warsaw, ul. Prosta 69, 02-838 Warszawa, Poland}}
\subjclass[2010]{46B03, 54E35}

\keywords{Lipschitz-free space, isomorphically universal separable Banach space, embedding of $c_0$}
\begin{abstract}In this note we study the structure of Lipschitz-free Banach spaces. We show that every Lipschitz-free Banach space over an infinite metric space contains a complemented copy of $\ell_1$. This result has many consequences for the structure of Lipschitz-free Banach spaces. Moreover, we give an example of a countable compact metric space $K$ such that $\F(K)$ is not isomorphic to a subspace of $L_1$ and we show that whenever $M$ is a subset of $\er^n$, then $\F(M)$ is weakly sequentially complete; in particular, $c_0$ does not embed into $\F(M)$.
\end{abstract}
\maketitle

\section*{Introduction}

Given a metric space $M$, it is possible to construct a Banach space $\F(M)$ in such a way that the Lipschitz structure of $M$ corresponds to the linear structure of $\F(M)$. This space $\F(M)$ is sometimes called ``Lipschitz-free space". We refer to the next section for some more details concerning the construction and basic properties of those spaces. Although Lipschitz-free spaces over separable metric spaces are easy to define, their structure is poorly understood to this day. The study of the linear structure of Lipschitz-free spaces over metric spaces has become an active field of study, see e.g. \cite{aude, dutFer, fonWoj, god, gk, kaufmann-published, lanPer}. In the first part of this paper we prove the following general result.

\begin{thm}\label{t:structure}Let $M$ be an infinite metric space. For the Banach space $X = \F(M)$, we have
\begin{enumerate}[\upshape (i)]
\item\label{l1c} $\ell_1 \stackrel{c}{\hookrightarrow} X$, i.e., there is a complemented subspace of $X$ isomorphic to $\ell_1$.
\end{enumerate}
From this we get
\begin{enumerate}[\upshape (i)]\setcounter{enumi}{1}
	\item\label{notInCK} $X\not \stackrel{c}{\hookrightarrow} \C(K)$, i.e., $X$ is not isomorphic to a complemented subspace of a $\C(K)$ space.
	\item $X^*$ is not weakly sequentially complete; in particular, $X$ is not isomorphic to $L^1$-predual.
	\item $X$ is not isomorphic to the Gurari\u{\i} space.
	\item $X$ is a projectively universal separable Banach space, i.e., for any separable Banach space $Y$ there exists a bounded linear operator from $X$ onto $Y$.
\end{enumerate}
\end{thm}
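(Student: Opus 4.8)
The plan is to prove (i) directly, by producing inside $\F(M)$ an isomorphic copy of $\ell_1$ spanned by suitable molecules together with a bounded projection onto it; statements (ii)--(v) will then be deduced from (i) via classical Banach space facts.

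The geometric heart is an extraction lemma: \emph{every infinite metric space $M$ admits a sequence of pairs of distinct points $x_n\neq y_n$, with $r_n:=d(x_n,y_n)$, and a constant $c\in(0,1]$ such that $d(\{x_n,y_n\},\{x_m,y_m\})\ge c\max(r_n,r_m)$ for all $n\neq m$} (the pairs are ``$c$-separated at their own scale''). First I would reduce to $M$ complete, since $\F(M)$ depends only on the completion. Then I would split according to whether $M$ has an accumulation point: if $p$ is one, pick $z_n\to p$ with $d(z_{n+1},p)\le\tfrac13 d(z_n,p)$ and set $(x_n,y_n)=(z_{2n},z_{2n+1})$; if $M$ is complete and discrete, consider $\delta(z):=\dist(z,M\setminus\{z\})>0$ and split again according to whether $\inf_z\delta(z)=0$ (take $z_n$ with $\delta(z_n)\to0$ together with a near-nearest-neighbour $w_n$, then thin the sequence greedily, using that a complete discrete space forces $\dist(z_n,S)$ to stay bounded below for each finite $S$) or $\inf_z\delta(z)>0$, i.e.\ $M$ uniformly discrete (take distinct points; pass to a bounded subsequence if one exists, otherwise to one on which $d(z_n,z_0)$ grows geometrically). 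In each case a short computation yields the $c$-separation. I expect this case analysis --- making in particular the uniformly discrete, unbounded case fit the same mould --- to be the main obstacle.

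Granting the lemma, put $u_n:=(\delta(x_n)-\delta(y_n))/r_n\in\F(M)$, so $\|u_n\|=1$. For the $\ell_1$ lower estimate, given finitely supported scalars $(a_n)$ define $f$ on $\bigcup_n\{x_n,y_n\}$ by $f(x_n)=\tfrac{r_n}{2}\operatorname{sign}(a_n)$ and $f(y_n)=-\tfrac{r_n}{2}\operatorname{sign}(a_n)$; this is $\tfrac1c$-Lipschitz (inside a pair by the definition of $r_n$, between two pairs by the $c$-separation), so (after a constant adjustment) its McShane extension lies in $\Lip_0(M)$ with norm $\le\tfrac1c$ and $\langle\sum a_nu_n,f\rangle=\sum|a_n|$, whence $(u_n)$ is $c$-equivalent to the $\ell_1$-basis. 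For the projection I would use disjointly supported Lipschitz bumps $\phi_n(z):=r_n\max\{0,\,1-d(z,x_n)/\rho_n\}$ with $\rho_n:=\tfrac{c}{3}r_n$ (viewed in $\Lip_0(M)$ after subtracting a constant, which does not affect pairings with elements of $\F(M)$). The $c$-separation makes the supports $\overline{B(x_n,\rho_n)}$ pairwise disjoint and far apart relative to the bump heights, giving the uniform bound $\|\sum_{n\in F}\eta_n\phi_n\|_{\Lip}\le\tfrac2c$ for all finite $F$ and all signs $\eta_n$, and one checks $\phi_n(u_m)=\delta_{nm}$. Testing against $g_F=\sum_{n\in F}\operatorname{sign}(\phi_n(x))\,\phi_n$ yields $\sum_n|\phi_n(x)|\le\tfrac2c\|x\|$ for every $x\in\F(M)$, so $T\colon x\mapsto(\phi_n(x))_n$ is a bounded operator $\F(M)\to\ell_1$; composing $T$ with the isomorphism $\ell_1\to Y:=\overline{\Span}\{u_n\}$, $(a_n)\mapsto\sum a_nu_n$, gives a bounded projection of $\F(M)$ onto $Y\cong\ell_1$. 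This proves (i).

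Finally I would deduce (ii)--(v) from (i). For (ii): if $X\stackrel{c}{\hookrightarrow}\C(K)$ then $\ell_1\stackrel{c}{\hookrightarrow}\C(K)$, and a projection $P\colon\C(K)\to\ell_1$ is not weakly compact (its restriction to the complemented copy of $\ell_1$ is an isomorphism, yet $\ell_1$ is not reflexive), so by Pe\l czy\'nski's theorem $P$ fixes a copy of $c_0$ --- impossible since $c_0\not\hookrightarrow\ell_1$. For (iii): $X^*$ contains $\ell_1^*=\ell_\infty$ complemented, hence contains $c_0$, so $X^*$ is not weakly sequentially complete; and the dual of an $L^1$-predual is (isomorphic to) an $L_1(\mu)$-space, which is weakly sequentially complete, so $X$ is not an $L^1$-predual. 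Then (iv) follows because the Gurari\u{\i} space is an $L^1$-predual, contradicting (iii). For (v): $\ell_1$, being complemented in $X$, is a quotient of $X$, and every separable Banach space is a quotient of $\ell_1$; composing quotient maps shows that $X$ maps onto every separable Banach space.
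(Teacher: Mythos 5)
Your overall architecture --- molecules $u_n=(\delta_{x_n}-\delta_{y_n})/r_n$ spanning a copy of $\ell_1$, disjointly supported Lipschitz bumps giving a bounded projection, and the classical deductions for (ii)--(v), all of which are correct --- is essentially what the paper records in the Remark following the proof of Theorem \ref{t:structure} (the paper's official route is slightly different: it embeds $\ell_\infty$ into $\Lip_0(M)=X^*$ and invokes the Bessaga--Pe{\l}czy{\'n}ski criterion). The problem is your extraction lemma, which is the load-bearing step and which is \emph{false} as stated. Take $M=\{0\}\cup\{1/n!\setsep n\in\en\}$. Infinitely many pairs with pairwise positive separation are pairwise disjoint, so their minima are distinct points $1/m!$ with $m$ unbounded. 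A pair $P$ with $\min P=1/m!$ has $\max P\ge 1/(m-1)!$, hence $\operatorname{diam}P\ge (m-1)/m!$; on the other hand, since the union of the pairs accumulates at $0$, some other pair $P'$ contains a point smaller than $1/m!$, so $d(P,P')\le 1/m!$. Thus $d(P,P')/\max(\operatorname{diam}P,\operatorname{diam}P')\le 1/(m-1)$, and no uniform $c>0$ can exist. (Independently of this, your recipe in the accumulation-point case already fails to deliver the claimed separation: the condition $d(z_{n+1},p)\le\tfrac13 d(z_n,p)$ does not prevent $d(z_{2n+1},p)$ from being vastly smaller than $d(z_{2n},p)$, in which case $r_n\approx d(z_{2n},p)$ while all later pairs lie within roughly $d(z_{2n+1},p)\ll r_n$ of the point $z_{2n+1}$.)

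What is actually achievable in every infinite metric space --- and still sufficient --- is an \emph{asymmetric} separation: one only needs the open balls $U(y_n,K\,r_n)$ to be pairwise disjoint and to contain none of the points $x_m$. This is exactly Lemma \ref{l:seq} of the paper; note that in one of its cases all the $x_n$ are taken to be one and the same point, so your symmetric condition $d(\{x_n,y_n\},\{x_m,y_m\})\ge c\max(r_n,r_m)$ is not merely hard to arrange but the wrong formulation. Under the asymmetric hypothesis your test function $f(x_n)=\tfrac{r_n}{2}\operatorname{sign}(a_n)$, $f(y_n)=-\tfrac{r_n}{2}\operatorname{sign}(a_n)$ is no longer uniformly Lipschitz and your bumps centred at $x_n$ no longer have disjoint supports; both must be replaced by the one-sided bumps $f_n(x)=\max\{r_n-d(y_n,x)/K,\,0\}$ supported in $U(y_n,Kr_n)$, which vanish at every $x_m$ and still satisfy $\langle u_m,f_n\rangle=\delta_{nm}$. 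With that change the lower $\ell_1$-estimate and the projection go through exactly as in your sketch, and the case analysis producing such pairs (unbounded; uniformly discrete; infinitely many limit points; a convergent sequence of isolated points, where one takes $x_n$ to be a nearest neighbour of $y_n$ so that $U(y_n,r_n)=\{y_n\}$) is the content of the proof of Theorem \ref{t:fin}.
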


It often happens that the Lipschitz-free space over a ``small enough'' space is isomorphic to $\ell_1$. For example, if $M\subset\er$ is a set of measure zero or if $M$ is a separable ultrametric space, then $\F(M)$ is isomorphic to $\ell_1$, see \cite{god} and \cite{cutDou}. By the result of A. Dalet \cite{aude}, $\F(K)$ is a dual space with MAP whenever $K$ is a countable compact metric space. Hence, one could conjecture that in this case $\F(K)$ is isomorphic to $\ell_1$. We give an example which shows that this is not the case.

\begin{thm}\label{t:example}There is a countable compact metric space $K$ such that $\F(K)\not\hookrightarrow L_1$, i.e., $\F(K)$ is not linearly isomorphic to a subspace of $L_1$. Moreover, $K$ is a convergent sequence, i.e., it has only one accumulation point.
\end{thm}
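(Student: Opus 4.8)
The plan is to exhibit $K$ as a convergent sequence which does not admit a bi-Lipschitz embedding into $L_1$. Since $K$ embeds isometrically into $\F(K)$ via the canonical map $x\mapsto\delta_x$, a linear isomorphism of $\F(K)$ onto a subspace of $L_1$ would restrict to a bi-Lipschitz embedding of $K$ into $L_1$ (with distortion at most the Banach--Mazur norm of the isomorphism); so it suffices to build such a $K$.

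For the building blocks I would take a sequence $(N_k)_{k\in\en}$ of finite metric spaces whose $L_1$-distortion $c_1(N_k)$ -- the infimum of $\Lip(f)\,\Lip(f^{-1})$ over bi-Lipschitz maps $f\colon N_k\to L_1$ -- satisfies $c_1(N_k)\to\infty$. This is where an external, nontrivial input enters: such families are classical in metric geometry, for instance the diamond graphs, the Laakso graphs, or bounded-degree expander graphs equipped with the graph metric. Fix a sequence $r_k\downarrow 0$, say $r_k=2^{-k}$, and replace the metric of $N_k$ by $\frac{2r_k}{\operatorname{diam}N_k}\,d_{N_k}$; write $\widetilde N_k$ for the rescaled space, so that $\operatorname{diam}\widetilde N_k\le 2r_k$ while $c_1(\widetilde N_k)=c_1(N_k)$.

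Now glue the blocks around a single point: set $K=\{0\}\sqcup\bigsqcup_{k\in\en}\widetilde N_k$ and define $d$ on $K$ by retaining the metric of each $\widetilde N_k$, by $d(x,0)=r_k$ for $x\in\widetilde N_k$, and by $d(x,y)=r_k+r_j$ whenever $x\in\widetilde N_k$ and $y\in\widetilde N_j$ with $k\neq j$. A routine case analysis, using $\operatorname{diam}\widetilde N_k\le 2r_k$, shows $d$ is a metric. The space $K$ is totally bounded (for $\varepsilon>0$ all but finitely many blocks lie within $\varepsilon$ of $0$, and the remaining ones are finite) and complete, hence compact; moreover each point of each $\widetilde N_k$ is isolated in $K$, while $0$ is the limit of any sequence choosing one point from infinitely many distinct blocks, so $K$ is homeomorphic to a convergent sequence with unique accumulation point $0$. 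Finally, by construction each $\widetilde N_k$ is an isometric metric subspace of $K$, so $c_1(K)\ge c_1(\widetilde N_k)=c_1(N_k)$ for every $k$; hence $K$ does not bi-Lipschitz embed into $L_1$, and by the first paragraph $\F(K)$ is not isomorphic to a subspace of $L_1$.

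I expect the only real work to be in the gluing step -- choosing the scales $r_k$ and the renormalizations of the $N_k$ so that, simultaneously, $d$ obeys the triangle inequality, $K$ is compact with exactly one accumulation point, and each $N_k$ reappears inside $K$ with distortion exactly $1$. (The inequality $\operatorname{diam}\widetilde N_k\le 2r_k$ is precisely what reconciles these demands.) The other ingredient, the existence of finite metric spaces with $c_1\to\infty$, is substantial but entirely off the shelf; everything else is soft.
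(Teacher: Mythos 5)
Your proof is correct, but it takes a genuinely different route from the paper. You reduce the theorem to a purely \emph{metric} statement: since $x\mapsto\delta_x$ is an isometry of $K$ into $\F(K)$, any linear embedding $\F(K)\hookrightarrow L_1$ restricts to a bi-Lipschitz embedding of $K$ into $L_1$, so it suffices to glue rescaled copies of finite metric spaces with $L_1$-distortion tending to infinity (expanders, diamonds, Laakso graphs) into a convergent sequence; your gluing and the verification that the blocks sit isometrically inside $K$ both check out, and the external input is classical. The paper instead works entirely at the level of the Banach spaces: it first shows $\F([0,1]^2)\not\hookrightarrow L_1$ by a Kislyakov-type absolutely-summing-operator argument, takes $K$ to be an amalgam over $0$ of rescaled finite grids $A_n\subset[0,1]^2$, identifies $\F(K)$ with $\bigoplus_{\ell_1}\F(A_n)$, and transfers the non-embeddability via local theory (approximation of finite-dimensional subspaces of $\F([0,1]^2)$ by subspaces of the $\F(nA_n)$, plus the Lindenstrauss--Pe{\l}czy\'nski characterization of subspaces of $L_1$ and finite representability of $L_1$ in $\ell_1$). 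The trade-off is real: your argument is shorter and softer given its off-the-shelf ingredient, but it can only produce examples where the obstruction is already visible in the metric of $K$ itself. The paper's building blocks are subsets of the Euclidean plane, which embed \emph{isometrically} into $L_1$ as metric spaces (indeed, by Remark \ref{r:LanProch}, $K$ may even be taken inside $[0,1]^2$ and the conclusion upgraded to exclude bi-Lipschitz embeddings of $\F(K)$); so the paper exhibits the strictly stronger phenomenon that $\F(K)\not\hookrightarrow L_1$ can occur even when $K$ presents no metric obstruction to embedding into $L_1$. For the theorem exactly as stated, your proof is a valid and more elementary alternative.
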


If $M$ contains a bi-Lipschitz copy of $c_0$, then $\F(M)$ is an isomorphically universal separable Banach space; i.e., $\F(M)$ contains an isomorphic copy of every separable Banach space (for more details we refer to Section \ref{s:c0}). Y. Dutrieux and V. Ferenczi in \cite{dutFer} asked for the converse. The answer to this question is in general negative, because it follows from the result of P. Kaufmann \cite[Corollary 3.3]{kaufmann-published} that $\F(c_0)$ is isomorphic to $\F(B_{c_0})$ (thus, it is a universal) and of course, since $B_{c_0}$ is bounded, $B_{c_0}$ does not contain a bi-Lipschitz copy of $c_0$. However, we can still ask the question in the setting of Banach spaces.

\begin{question}\label{q:1}
Let $X$ be a Banach space. Is $\F(X)$ universal if and only if $X$ contains a bi-Lipschitz copy of $c_0$?
\end{question}

The following result is a partial progress towards the answer to this question. Up to our knowledge, Question \ref{q:1} is left open.

\begin{thm}\label{t:c0}Let $M\subset \er^n$ be an arbitrary set. Then $\F(M)$ is weakly sequentially complete.\\
Consequently, $c_0\not \hookrightarrow \F(M)$; i.e., $c_0$ is not linearly isomorphic to a subspace of $\F(M)$.
\end{thm}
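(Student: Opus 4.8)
\medskip
\noindent\emph{Proof proposal.} I would handle the two assertions separately. The second one follows from the first together with two standard facts: weak sequential completeness passes to norm-closed subspaces, and $c_0$ is \emph{not} weakly sequentially complete (its summing basis is weakly Cauchy but fails to converge weakly); hence no weakly sequentially complete Banach space contains an isomorphic copy of $c_0$. It therefore suffices to prove that $\F(M)$ is weakly sequentially complete for every $M\subseteq\er^n$. Moreover, since $M$ embeds isometrically into $\er^n$, the space $\F(M)$ embeds isometrically and linearly into $\F(\er^n)$ (a standard consequence of the McShane extension theorem and the functoriality of $\F$), and weak sequential completeness is inherited by subspaces; so it is enough to treat $M=\er^n$.

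The main tool I would use is a representation of $\F(\er^n)$ as a quotient of an $L_1$-space. On $\er^n$ with the Euclidean metric---which we may assume, equivalent metrics giving isomorphic free spaces---one has $\Lip(f)=\|\nabla f\|_\infty$ for every $f\in\Lip_0(\er^n)$, where $\nabla f$ is the a.e.-defined gradient. Consequently $f\mapsto\nabla f$ is a linear isometry of $\Lip_0(\er^n)=\F(\er^n)^*$ onto the subspace of $L_\infty(\er^n;\er^n)=L_1(\er^n;\er^n)^*$ consisting of curl-free (i.e.\ gradient) fields; this range is weak${}^*$-closed, since curl-freeness is a weak${}^*$-closed condition (it amounts to annihilating a fixed family of elements of $L_1(\er^n;\er^n)$ built from smooth compactly supported functions), and the map is weak${}^*$-to-weak${}^*$ continuous, which one checks by integration by parts, using that weak${}^*$-convergence of a bounded net in $\Lip_0(\er^n)$ is pointwise convergence. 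Passing to pre-adjoints, $\F(\er^n)$ is isometric to the quotient $L_1(\er^n;\er^n)/N\cong L_1/N$, where $N=\{g\in L_1(\er^n;\er^n):\operatorname{div}g=0\text{ in }\mathcal D'\}$, the quotient map $Q$ sending $g$ to $-\operatorname{div}g$ viewed as an element of $\F(\er^n)$. (For $n=1$ one has $N=\{0\}$ and this recovers the classical identification $\F(\er)\cong L_1$; for $n\ge2$ the kernel $N$ is nontrivial, consistently with the fact, due to Naor--Schechtman, that $\F(\er^n)$ need not embed into $L_1$.)

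It remains to prove that the quotient $L_1/N$ is weakly sequentially complete, and this is where the substance lies. Given a weakly Cauchy sequence $(\mu_k)$ in $\F(\er^n)$---which we may assume to be bounded away from $0$ in norm, the contrary case being trivial---I would lift it to a bounded sequence $(g_k)$ in $L_1(\er^n;\er^n)$ and apply the Kadec--Pe\l czy\'nski dichotomy: after passing to a subsequence, either $(g_k)$ is uniformly integrable, and then it has a weakly convergent subsequence whose $Q$-image must, since $(\mu_k)$ is weakly Cauchy, be the weak limit of $(\mu_k)$; or else $g_k=h_k+z_k$ with $\|z_k\|_1\to0$ and $(h_k)$ pairwise disjointly supported with $\inf_k\|h_k\|_1>0$, so that $(Qh_k)=(\mu_k-Qz_k)$ is again weakly Cauchy. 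The hard part---and the step I expect to be the main obstacle---is to dispose of this second alternative: one must show that the $Q$-image of a disjointly supported, hence $\ell_1$-spanning, sequence cannot be a normalized sequence that is weakly Cauchy but not weakly convergent, and this appears to require the explicit description of $N$ as the divergence-free $L_1$-vector fields, rather than merely the abstract fact that $\F(\er^n)$ is a quotient of $L_1$. Equivalently, one is led to verify Pe\l czy\'nski's property $(V^*)$ for this particular quotient, or property $(u)$ together with the absence of $c_0$; note that the soft route ``$L$-embedded $\Rightarrow$ weakly sequentially complete'' does not seem available, since $N$ does not appear to be a nicely placed subspace of $L_1$. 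Finally, once $\F(\er^n)$ is shown to be weakly sequentially complete, the non-embedding of $c_0$ into $\F(M)$ for arbitrary $M\subseteq\er^n$ follows as in the first paragraph.
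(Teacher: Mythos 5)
Your reductions in the first paragraph are correct and match the paper's: weak sequential completeness passes to closed subspaces, $c_0$ is not weakly sequentially complete, and $\F(M)$ embeds isometrically into $\F(\er^n)$, so it suffices to treat $M=\er^n$. The representation of $\F(\er^n)$ as the quotient $L_1(\er^n;\er^n)/N$ with $N$ the divergence-free fields is also a legitimate and known description. But the proof has a genuine gap exactly where you flag it: being a quotient of an $L_1$-space gives nothing by itself, since \emph{every} separable Banach space --- in particular $c_0$ --- is a quotient of $\ell_1$, so weak sequential completeness is not a quotient-stable property. Everything therefore hinges on the specific structure of $N$, and the step you defer (ruling out the second alternative of the Kadec--Pe{\l}czy\'nski dichotomy, i.e.\ showing that the image under $Q$ of an almost disjointly supported normalized sequence cannot be weakly Cauchy without being weakly convergent) is precisely the entire analytic content of the theorem. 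As written, the argument establishes only that $\F(\er^n)$ is a quotient of $L_1$, which was already known and is strictly weaker than the claim.

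The paper goes a different and much softer route, which you may want to compare with yours: instead of realizing $\F(\er^n)$ as a quotient of an $L_1$-space, it realizes $\F([0,1]^n)$ (isomorphic to $\F(\er^n)$ by Kaufmann's result) as a \emph{subspace} of $(\C^1([0,1]^n))^*$, and then invokes Bourgain's theorem that this dual is weakly sequentially complete. The only technical work is an extension lemma: every $1$-Lipschitz function on a finite subset of $\er^n$ extends to a $\C^1$ function with $\|\cdot\|^1_\infty$-norm at most $\max\{\|f\|_\infty,1\}+\varepsilon$ (mollify a Lipschitz extension, then correct the finitely many values by smooth bumps supported in disjoint balls). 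This yields the two-sided norm estimate making $\delta_x\mapsto(f\mapsto f(x))$ an isomorphic embedding of $\F([0,1]^n)$ into $(\C^1([0,1]^n))^*$, and weak sequential completeness is inherited by subspaces --- the direction in which the property actually does pass. If you want to salvage your approach, you would need to prove something like Pe{\l}czy\'nski's property $(V^*)$ for $L_1/N$ directly, which (as you suspect) requires genuinely new estimates on divergence-free $L_1$ fields; the paper's detour through $\C^1$ avoids this entirely at the cost of quoting Bourgain's (deep) result.
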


To the best of our knowledge, it was not even known whether there could be a Lipschitz-free space which neither embeds into $L_1$ nor is universal. The example given in Theorem \ref{t:example} is one such example (because, by the result of A. Dalet \cite{aude}, $\F(K)$ is a separable dual space and so it does not contain $c_0$). Another one is the space $\F([0,1]^n)$, see Theorems \ref{t:c0} and \ref{t:notEmbed}.

In the last section of this note we mention some open problems related to the structure of isomorphically universal Lispchitz-free Banach spaces.

The notation and terminology we use are relatively standard. If $X$ and $Y$ are Banach spaces, the symbol $Y\hookrightarrow X$ (resp. $Y\not \hookrightarrow X$) means that $Y$ is (resp. is not) linearly isomorphic to a subspace of $X$. If $(M,d)$ is a metric space, $x\in M$ and $r \geq 0$, we use $U(x,r)$ and $B(x,r)$ to denote respectively the open and closed ball, i.e., the set $\{y\in M\setsep d(x,y)< r\}$ and $\{y\in M\setsep d(x,y)\leq r\}$.

\section{Basic facts about Lipschitz-free spaces}\label{s:2}
Let $(M,d)$ be a metric space with a distinguished point denoted by $0$. Consider the space $\Lip_0(M)$ of all real-valued Lipschitz functions that map $0\in M$ to $0\in\er$. It has a vector space structure and one can define a norm $\Norm_{\Lip}$ on $\Lip_0(M)$, where for $f\in \Lip_0(M)$, $\|f\|_{\Lip}$ is the minimal Lipschitz constant, i.e., $\sup\{\frac{|f(x)-f(y)|}{d(x,y)}:x\neq y\in M\}$. Then $\big(\Lip_0(M),\Norm_{\Lip}\big)$ is a Banach space.

For any $x\in M$ denote by $\delta_x\in \Lip_0(M)^*$ the evaluation functional, i.e., $\delta_x(f)=f(x)$ for every $f\in \Lip_0(M)$. Denote by $\F(M)$ the closure of the linear span of $\{\delta_x:x\in M\}$ with the dual space norm denoted simply by $\Norm$. Observe that for any $x,y\in M$ we have $\|\delta_x-\delta_y\|=d(x,y)$.

This space is usually called Lipschitz-free Banach space (also Arens-Eells space) and it is uniquely characterized by the following universal property.\\

Let $X$ be a Banach space and suppose $L: M\to X$ is a Lipschitz map such that $L(0) = 0$. Then there exists a unique linear map $\widehat{L}:\F(M)\to X$ extending $L$, i.e., the following diagram commutes
\begin{center}\begin{tikzpicture}
  \matrix (m) [matrix of math nodes,row sep=3em,column sep=4em,minimum width=2em]
  {
    M & X \\
   \F(M) & X \\};
  \path[-stealth]
    (m-1-1) edge node [left] {$\delta_M$} (m-2-1)
            edge node [above] {$L$} (m-1-2)
    (m-2-1.east|-m-2-2) edge [dashed] node [above] {$\widehat{L}$} (m-2-2)
    (m-1-2) edge node [right] {$\mathrm{id}_X$} (m-2-2);    
\end{tikzpicture}\end{center}
and $\|\widehat{L}\| = \|L\|_{\Lip}$ where $\Norm_{\Lip}$ denotes the Lipschitz norm of $L$.\\

This fact is usually referred to as folklore. The proof is so simple that we include it here.\\[-8pt]

Fix a Banach space $X$ and a Lipschitz map $L:M\rightarrow X$ mapping $0$ to $0$. Extend linearly $L$ from $M$ onto $\Span\{\delta_x\setsep x\in M\}$ and denote this extension by $\widehat{L}$. We only need to check that $\|\widehat{L}\|_{\Lip}=\|L\|_{\Lip}$. Pick some $a\in\Span\{\delta_x\setsep x\in M\}$. Then $\|\widehat{L}(a)\|_X=f(\widehat{L}(a))$ for some $f\in B_{X^*}$. However, $f\circ L$ then belongs to $\Lip_0(M)$ and $\|f\circ L\|_{\Lip}\leq \|L\|_{\Lip}$. It follows that $\|a\|\|L\|_{\Lip}\geq \|\widehat{L}(a)\|_X$ which proves the claim. Then we can extend $\widehat{L}$ to $\F(M)$, the closure of $\Span\{\delta_x\setsep x\in M\}$.\\[-8pt]

Using this universal property of $\F(M)$, it is immediate that $\F(M)^* = \Lip_0(M)$. Indeed, it is enough to consider $X=\er$ in the universal property mentioned above.

Further, it is useful to observe that whenever $N$ is a subspace of a metric space $(M,d)$, then $\F(N)$ is linearly isometric to a subspace of $\F(M)$. Indeed, the isometry is determined by sending $\delta_x\in\F(N)$ to $\delta_x\in\F(M)$; in order to see it is an isometry it is enough to use the well-known fact that any $f\in\Lip_0(N)$ can be extended to $F\in\Lip_0(M)$ with $\|f\|_{\Lip} = \|F\|_{\Lip}$, e.g., by putting $F(x): = \inf\{f(n) + \|f\|_{\Lip}d(n,x)\setsep n\in N\}$, $x\in M$; see e.g. \cite[Lemma 7.39]{johHaj}. Using this observation together with the universal property of $\F(M)$ we see that the Lipschitz structure of $M$ corresponds to the linear structure of $\F(M)$. For example, if $N$ is bi-Lipschitz equivalent (resp. isometric) to a subset of $M$, then $\F(N)$ is linearly isomorphic (resp. linearly isometric) to a subspace of $\F(M)$, etc.\\[-8pt]

The last basic fact we would like to mention here is that it is possible to give an `internal' definition of the norm on $\F(M)$, i.e. by a formula which refers only to the metric on the metric space $M$. This is in contrast to the `external' definition given above which refers to the space $\Lip_0(M)$ in the computation of the norm. This is described e.g. in \cite{weaver}. The proof is not difficult and so we include it here as well.

Let us consider another norm, denoted by $\Norm_{KR}$, on $\Span\{\delta_x\setsep x\in M\}$ which is a variant of the so-called Kantorovich-Rubinstein metric, a concept that penetrated many areas of mathematics and computer science. Let us identify $\delta_0$ with $0\in\F(M)$. For $a\in\Span\{\delta_x\setsep x\in M\setminus\{0\}\}$ set
$$\|a\|_{KR}=\inf\{|\alpha_1|\cdot d(y_1,z_1)+\ldots+|\alpha_n|\cdot d(y_n,z_n)\setsep a=\alpha_1(\delta_{y_1}-\delta_{z_1})+\ldots+\alpha_n(\delta_{y_n}-\delta_{z_n})\}.$$
It is straightforward to check that $\Norm_{KR}$ is a seminorm. Moreover, it is the largest seminorm $\Norm'$ on $\Span\{\delta_x\setsep x\in M\}$ satisfying $\|\delta_x-\delta_y\|'\leq d(x,y)$ for every $x,y\in M$. Indeed, any seminorm $\Norm'$ with that property must satisfy the inequality $\|x\|'\leq |\alpha_1|\|\delta_{y_1}-\delta_{z_1}\|'+\ldots+|\alpha_n|\|\delta_{y_n}-\delta_{z_n}\|'$ when $x=\alpha_1(\delta_{y_1}-\delta_{z_1})+\ldots+\alpha_n(\delta_{y_n}-\delta_{z_n})$ which shows that $\|x\|'\leq \|x\|_{KR}$. Since the standard norm $\Norm$ on $\F(M)$ satisfies the condition, we get that $\Norm\leq\Norm_{KR}$ which implies that $\Norm_{KR}$ is actually a norm and that $\|\delta_x-\delta_y\|_{KR}=d(x,y)$ for every $x,y\in M$.

Consider now the identity mapping $L:M\rightarrow \overline{\Span\{\delta_x\setsep x\in M\}}^{\Norm_{KR}}$ sending $x$ to $\delta_x$. It is an isometric embedding. By the universality property of $\F(M)$, $L$ extends to $\widehat{L}:\F(M)\rightarrow \overline{\Span\{\delta_x\setsep x\in M\}}^{\Norm_{KR}}$ which is still $1$-Lipschitz. It follows that $\Norm_{KR}\leq\Norm$, so the norms $\Norm$ and $\Norm_{KR}$ are one and the same. This fact is often referred to as the Kantorovich duality.

\section{Embedding of $\ell_1$}

The purpose of this section is to prove Theorem \ref{t:structure}. It will be deduced from the fact that $\ell_\infty$ embeds in the dual of a Lipschitz-free space, i.e., into the space of Lipschitz functions. Let us note that we do not know whether $\ell_\infty$ embeds isometrically into $\Lip_0(M)$ for every infinite metric space $M$. The natural way of embedding $\ell_\infty$ into the space of Lipschitz functions is described in the Lemma below.

\begin{lemma}\label{l:seq}Let $(M,d)$ be a metric space, $K > 0$ and let $(x_n,y_n)_{n\in\en}$ be a sequence of pairs of points from $M$ satisfying the following three conditions.
\begin{enumerate}[\upshape (i)]
\item\label{neq} For every $n\in\en$, we have $x_n\neq y_n$.
\item\label{cond} For every $n,m\in\en$, we have $x_m\notin U(y_n,K\cdot d(y_n,x_n))$.
\item\label{lin} For every $n\neq m$, we have $U(y_n,K\cdot d(y_n,x_n))\cap U(y_m, K\cdot d(y_m,x_m))=\emptyset$.
\end{enumerate}
Then $\ell_\infty\hookrightarrow \Lip_0(M)$.
\end{lemma}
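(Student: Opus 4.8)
The plan is to construct an explicit bounded linear injection $T\colon \ell_\infty \to \Lip_0(M)$ by building, for each $n$, a bump-type Lipschitz function $f_n$ supported near $y_n$, and then setting $T(a) = \sum_n a_n f_n$ for $a = (a_n) \in \ell_\infty$. The natural candidate is
\[
f_n(x) = \max\bigl\{0,\; d(y_n,x_n) - \tfrac{1}{K}\operatorname{dist}(x, M\setminus U(y_n, K d(y_n,x_n)))\bigr\},
\]
or more simply a scaled, truncated distance function of the form $f_n(x) = \bigl(d(y_n,x_n) - \tfrac1K d(x,y_n)\bigr)^+$ — a function which equals $d(y_n,x_n)$ at $x=y_n$, vanishes outside $U(y_n, K d(y_n,x_n))$, and has Lipschitz constant $1/K$. (One must also subtract the value at the base point $0$ if $0$ happens to lie in the support, to land in $\Lip_0(M)$; condition (ii) applied with the base point, or a harmless renormalization, handles this.) I would first record the three key properties of each $f_n$: (a) $\|f_n\|_{\Lip} \le 1/K$; (b) $f_n$ is supported in $U(y_n, K d(y_n,x_n))$; (c) $f_n(x_n) = 0$ while $f_n(y_n) = d(y_n,x_n) > 0$, using conditions (i) and (ii).

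Next I would verify that $T$ is well-defined and bounded. The crucial point is that, by condition (iii), the supports $U(y_n, K d(y_n,x_n))$ are pairwise disjoint, so at any point $x\in M$ at most one term $a_n f_n(x)$ is nonzero; hence $\sum_n a_n f_n$ is a well-defined function, bounded by $\|a\|_\infty \cdot \sup_n d(y_n,x_n)$ pointwise. For the Lipschitz estimate, take $x,x' \in M$. If both lie in the same support ball, or one lies in a support ball and the other outside all of them, the disjointness of supports plus property (a) gives $|T(a)(x) - T(a)(x')| \le \|a\|_\infty \tfrac1K d(x,x')$ after the standard ``distance to the boundary of the support'' argument. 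If $x,x'$ lie in the supports of two \emph{different} functions $f_n, f_m$, one compares both values to the value at a point on the segment leaving each support — here is where one uses that $f_n$ vanishes on the boundary of its support and that the two balls are disjoint, so the path from $x$ to $x'$ must exit the support of $f_n$ and enter that of $f_m$. This yields $\|T(a)\|_{\Lip} \le C\|a\|_\infty$ for an absolute constant $C$ depending only on $K$ (one can take $C = 2/K$ or similar).

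Finally, for the lower bound, I would evaluate $T(a)$ against the finitely supported elements $\delta_{y_n} - \delta_{x_n} \in \F(M)$: since $f_m(y_n) = f_m(x_n) = 0$ for $m \neq n$ (disjoint supports and condition (ii) ensuring $x_m$ avoids the ball around $y_n$, hence $y_n$ avoids — wait, rather condition (iii) gives $y_n$ out of the $m$-th ball, and condition (ii) gives $x_n$ out of it), we get $T(a)(y_n) - T(a)(x_n) = a_n d(y_n, x_n)$, so $\|T(a)\|_{\Lip} \ge |a_n|$ for every $n$, whence $\|T(a)\|_{\Lip} \ge \|a\|_\infty$. Thus $T$ is an isomorphic embedding of $\ell_\infty$ into $\Lip_0(M)$. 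The main obstacle is the Lipschitz estimate for $T(a)$ in the cross-ball case; the disjointness hypothesis (iii) together with the fact that each $f_n$ is a truncated distance function vanishing outside its ball is exactly what makes the triangle-inequality bookkeeping go through, and isolating the right intermediate point (an exit point from the support along a near-geodesic-ish comparison, or simply splitting $d(x,x')$ using points on the boundaries of the two balls) is the step that needs care.
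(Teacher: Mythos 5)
Your plan is correct and essentially reproduces the paper's proof: the same truncated-distance bumps $f_n(x)=\bigl(d(y_n,x_n)-\tfrac{1}{K}d(x,y_n)\bigr)^+$, the same operator (your sum over the disjoint supports is exactly the paper's $T(\alpha)(x)=\alpha(n(x))f_{n(x)}(x)$), the same $2/K$ upper bound, the same basepoint fix via (ii), and the same lower bound by evaluating at the pairs $(x_n,y_n)$. The one step you flag as delicate --- the cross-ball Lipschitz estimate --- needs no exit-point or path argument (and none is available in a general metric space): since each $f_n$ is globally $\tfrac{1}{K}$-Lipschitz on all of $M$ and $f_n(x')=0=f_m(x)$ when $x,x'$ lie in the supports of $f_n,f_m$ respectively, one gets directly $|T(a)(x)-T(a)(x')|\le \|a\|_\infty\bigl(|f_n(x)-f_n(x')|+|f_m(x')-f_m(x)|\bigr)\le \tfrac{2}{K}\|a\|_\infty\, d(x,x')$.
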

\begin{proof}We may without loss of generality assume that $0=x_1$ (because $\Lip_0(M)\ni f\mapsto f - f(x_1)$ is a linear isometry onto the space of Lipschitz functions $g$ with $g(x_1) = 0$). For every $n\in\en$ we define $f_n(x):=\max\{d(y_n,x_n) - \tfrac{d(y_n,x)}{K},0\}$, $x\in M$. Then $f_n\in\Lip(M)$. Moreover, it is easy to see that $\|f_n\|_{\Lip} \leq \tfrac{1}{K}$ and $\|f\|_\infty = d(y_n,x_n)$. By \eqref{cond}, we have $K\cdot d(x_n,y_n)\leq d(x_1,y_n)$; hence, $f_n(0) = 0$.

Notice that condition \eqref{lin} implies that if $f_n(x)\neq 0$, then for every $m\neq n$ we have $f_m(x) = 0$. For every $x\in M$, we denote by $n(x)$ the unique $n\in\en$ with $f_n(x)\neq 0$ if it exists; otherwise, we put $n(x): = 1$. Finally, we define $T:\ell_\infty\to\Lip_0(M)$ by
$$T(\alpha)(x) : = \alpha\big(n(x)\big)\cdot f_{n(x)}(x),\qquad \alpha = (\alpha(n))_{n\in\en}\in\ell_\infty.$$
First, we will show that $T$ is linear and $\|T\|\leq \tfrac{2}{K}$. It is easy to see that $T$ is linear; hence, it suffices to show that for $\alpha = (\alpha(n))_{n\in\en}\in\ell_\infty$ with $\|\alpha\| = 1$, we have $\|T(\alpha)\|_{\Lip}\leq \tfrac{2}{K}$. Fix $x, y\in M$. We need to show that $|T(\alpha)(x) - T(\alpha)(y)|\leq \tfrac{2}{K}d(x,y)$. If $n(x) = n(y)$, this is easy because $f_{n(x)}$ is $\tfrac{1}{K}$-Lipschitz. Hence, we may assume that $n(y)\neq n(x)$. Thus, we have $f_{n(x)}(y) = 0 = f_{n(y)}(x)$ and
$$|T(\alpha)(x) - T(\alpha)(y)|\leq f_{n(x)}(x) + f_{n(y)}(y) = |f_{n(x)}(x) - f_{n(x)}(y)| + |f_{n(y)}(y) - f_{n(y)}(x)|\leq \tfrac{2}{K}d(x,y).$$

In order to see that $T$ is an isomorphism, we will use condition \eqref{cond}. Fix $\alpha = (\alpha(n))_{n\in\en}\in\ell_\infty$ and $N\in\en$. By \eqref{cond}, for every $k\in \en$, we have $f_k(x_N) = 0$; hence $T(\alpha)(x_N) = 0$ and we have
$$|T(\alpha)(x_N) - T(\alpha)(y_N)| = |T(\alpha)(y_N)| = |\alpha(N)|f_N(y_N) = |\alpha(N)|d(x_N,y_N).$$
Therefore, $\|T(\alpha)\|_{\Lip}\geq |\alpha(N)|$ and, since $N$ was arbitrary, $\|T(\alpha)\|_{\Lip}\geq \|\alpha\|_\infty$.
\end{proof}

The following result is the main step towards the proof of Theorem \ref{t:structure}.

\begin{thm}\label{t:fin}Let $M$ be an infinite metric space. Then $\ell_\infty\hookrightarrow \Lip_0(M)$.
\end{thm}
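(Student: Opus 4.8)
The plan is to deduce this from Lemma~\ref{l:seq}: it suffices to produce inside $M$ a sequence of pairs $(x_n,y_n)_{n\in\en}$ and a constant $K>0$ satisfying \eqref{neq}--\eqref{lin}. Two reductions streamline the search. First, conditions \eqref{neq} and \eqref{cond} involve only the numbers $d(x_m,y_n)$ and $d(x_n,y_n)$, and \eqref{lin} is implied by the inequality $d(y_n,y_m)\ge K\bigl(d(x_n,y_n)+d(x_m,y_m)\bigr)$ (by the triangle inequality, a point in $U(y_n,r_n)\cap U(y_m,r_m)$ forces $d(y_n,y_m)<r_n+r_m$); hence it is enough to exhibit the configuration inside a countably infinite subset of $M$, and throughout we shall verify \eqref{lin} in this stronger form. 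Second --- needed only in Case~1 below --- Lipschitz functions on $M$ extend uniquely, with the same constant, to the completion $\widehat M$, so $\Lip_0(M)\cong\Lip_0(\widehat M)$ isometrically and we may assume $M$ is complete. I distinguish three cases.

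\emph{Case 1: $M$ contains a sequence of distinct points that is Cauchy.} Let $x_0\in M$ be its limit; discarding at most one term and passing to a subsequence, we obtain distinct $q_1,q_2,\dots\in M$ with $0<d_{n+1}<\tfrac14 d_n$, where $d_n:=d(q_n,x_0)$. Then $\tfrac34 d_n\le d(q_n,q_m)\le\tfrac54 d_n$ whenever $n<m$. I would set $y_n:=q_{2n-1}$, $x_n:=q_{2n}$, and $K:=\tfrac14$. The point of making the \emph{earlier} point $q_{2n-1}$ (the one \emph{farther} from $x_0$) the centre is that the ball $U\bigl(y_n,K\,d(x_n,y_n)\bigr)$ has radius below $\tfrac54 K\,d_{2n-1}=\tfrac{5}{16}d_{2n-1}$, while every other $q_j$ lies at distance at least $\tfrac34 d_{\min(j,2n-1)}\ge\tfrac34 d_{2n-1}$ from $q_{2n-1}$ --- so the ball contains no $x_m$; and since the relevant lower bound in $d(y_n,y_m)$ always uses the index with the larger $d$, one gets $d(y_n,y_m)\ge\tfrac34 d_{2n-1}\ge r_n+r_m$. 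Hence \eqref{neq}--\eqref{lin} hold. (With $q_{2n}$ as the centre instead, its ball would engulf the whole tail $q_{2n},q_{2n+1},\dots$ clustered near $x_0$, and \eqref{cond} would fail --- this orientation is the one genuinely delicate choice.)

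\emph{Cases 2 and 3.} Suppose now $M$ has no Cauchy sequence of distinct points. If $M$ has a bounded infinite subset $B$, then $B$ is not totally bounded, so for some $\delta>0$ it contains an infinite $\delta$-separated set $\{z_n\}$, of some diameter $D\ge\delta$; with $y_n:=z_{2n-1}$, $x_n:=z_{2n}$, $K:=\tfrac{\delta}{4D}$, each ball $U\bigl(y_n,K\,d(x_n,y_n)\bigr)$ has radius at most $\tfrac{\delta}{4}$, hence meets $\{z_j\}$ only in $z_{2n-1}$ (so contains no $x_m$), and two such balls are disjoint since their centres are $\ge\delta$ apart while the radii sum to $\le\tfrac{\delta}{2}$; thus \eqref{neq}--\eqref{lin} hold. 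If instead $M$ has no bounded infinite subset, fix $p\in M$: every ball about $p$ is finite, so one may choose distinct $w_1,w_2,\dots\in M\setminus\{p\}$ with $b_{k+1}>4b_k$, where $b_k:=d(p,w_k)$. Putting $x_k:=p$, $y_k:=w_k$, $K:=\tfrac14$, conditions \eqref{neq} and \eqref{cond} are immediate ($d(p,w_k)=b_k\ge\tfrac14 b_k$), and for $n<m$ one has $d(w_n,w_m)\ge b_m-b_n>\tfrac34 b_m\ge\tfrac14(b_n+b_m)$, which is \eqref{lin}.

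The main obstacle is really just Case~1: getting the orientation of the pairs right so that the balls stay small, and thinning the sequence aggressively enough that a single constant $K$ works everywhere. Passing to the completion makes the thinning itself routine (one only thins the sequence of positive reals $d_n\to 0$), and then Cases~2 and~3 are straightforward bookkeeping.
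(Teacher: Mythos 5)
Your proof is correct, and while it rests on the same key lemma as the paper's (Lemma~\ref{l:seq}), the case decomposition is genuinely different. Your trichotomy --- (1) $M$ contains a Cauchy sequence of distinct points, (2) otherwise $M$ has a bounded infinite subset, which then cannot be totally bounded and so contains an infinite $\delta$-separated set, (3) otherwise every bounded subset is finite --- is exhaustive, and in each case your constants do satisfy \eqref{neq}--\eqref{lin}, with \eqref{lin} checked in the stronger form $d(y_n,y_m)\ge K\bigl(d(x_n,y_n)+d(x_m,y_m)\bigr)$. The paper instead distinguishes four cases (unbounded; bounded with an infinite closed discrete subset; bounded with infinitely many limit points; compact with a convergent sequence of isolated points), and its most delicate case is the last one, which requires an inductive choice of pairs subject to three interlocking conditions, taking $x_n$ to realize $\dist(y_n,M\setminus\{y_n\})$ so that the ball $U(y_n,d(x_n,y_n))$ degenerates to $\{y_n\}$. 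Your Case~1 absorbs both of the paper's delicate cases at once: after passing to the completion and thinning so that $d_{n+1}<\tfrac14 d_n$, a single constant $K=\tfrac14$ works, and your point about orienting each pair so that the term farther from the limit is the centre $y_n$ is precisely where the paper resorts to the nearest-point device instead. Your Case~2 also replaces the paper's appeal to Ramsey's theorem by the more elementary maximal-$\delta$-separated-set argument. What the paper's organisation buys is mainly a more topological description of the cases; your version is shorter and arguably cleaner, at the cost of having to invoke the completion (which the paper does as well, as a global reduction).
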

\begin{proof}First, note that we may without generality assume that $M$ is complete, because otherwise we take the completion $N$ of $M$ and use the obvious fact that $\Lip_0(\overline{A})$ is linearly isometric to $\Lip_0(A)$ for every $A\subset N$; in particular, $\Lip_0(N)$ is linearly isometric to $\Lip_0(M)$.

Now, we will prove the statement considering several cases. In each of them we will find a sequence of pairs of points from $M$ satisfying the assumptions of Lemma \ref{l:seq}.
\begin{case}\label{1}$M$ is unbounded; i.e., for every $K > 0$ there are $x,y\in M$ with $d(x,y) > K$.\end{case}
\begin{proof}[Proof for Case \ref{1}]
Pick a sequence $(z_n)_{n=1}^\infty$ in $M$ such that, for every $n\in\en$ we have $d(z_{n+1},0) > 2d(z_n,0)$. Now, for each $n\in\en$, put $x_n:=z_{2n-1}$ and $y_n:=z_{2n}$. We will show that the sequence $(x_n,y_n)_{n\in\en}$ satisfies the assumptions of Lemma \ref{l:seq} with $K = \tfrac{1}{3}$.

Obviously, (i) is satisfied. Further, for $n<m$ we have
\begin{align}
 \label{eq:leq} d(z_n,z_m) \leq d(z_n,0) + d(z_m,0) < (1 + 2^{-(m-n)}) d(z_m,0),\\
 \label{eq:geq} d(z_n,z_m) \geq d(z_m,0) - d(z_n,0) > (1 - 2^{-(m-n)}) d(z_m,0).
\end{align}

Let us show that (ii) holds. We need to show that, for $n, m\in\en$, we have
\begin{equation}
 \label{eq:mustii} d(z_{2m-1},z_{2n})\geq \tfrac{1}{3}\cdot d(z_{2n-1},z_{2n}).
\end{equation}
This is obvious if $m=n$. If $m<n$, then we have
$$d(z_{2m-1},z_{2n})\stackrel{\eqref{eq:geq}}{>}(1 - 2^{-(2n-2m+1)}) d(z_{2n},0)\stackrel{\eqref{eq:leq}}{>}(1 - 2^{-3})(1+2^{-1})^{-1}d(z_{2n},z_{2n-1}).$$
If $n < m$, then we have
$$d(z_{2m-1},z_{2n})\stackrel{\eqref{eq:geq}}{>}(1-2^{-1}) d(z_{2m-1},0)>(1-2^{-1})d(z_{2n},0)\stackrel{\eqref{eq:leq}}{>}(1-2^{-1})(1+2^{-1})^{-1}d(z_{2n},z_{2n-1}).$$
This proves \eqref{eq:mustii}; hence, condition (ii) from Lemma \ref{l:seq} is satisfied.

Finally, in order to see that (iii) holds, it is sufficient to see that, for $n\neq m$, we have
\begin{equation}
 \label{eq:mustiii} d(z_{2m},z_{2n}) > \tfrac{1}{3}\cdot \big(d(z_{2n-1},z_{2n}) + d(z_{2m-1},z_{2m})\big).
\end{equation}
We may assume that $n<m$ and then we have
\begin{equation*}
\begin{split}
d(z_{2n-1},z_{2n}) + d(z_{2m-1},z_{2m}) & \stackrel{\eqref{eq:leq}}{<} (1+2^{-1})^2 d(z_{2m},0) \stackrel{\eqref{eq:geq}}{<} (1+2^{-1})^2 (1-2^{-(2m-2n)})^{-1}d(z_{2m},z_{2n})\\
& \leq (1+2^{-1})^2 (1-2^{-2})^{-1}d(z_{2m},z_{2n}) = 3 d(z_{2m},z_{2n}).
\end{split}
\end{equation*}
This proves \eqref{eq:mustiii}; hence, condition (iii) from Lemma \ref{l:seq} is satisfied.
\end{proof}
\begin{case}\label{2}$M$ is bounded and there is a closed infinite subset $N\subset M$ such that each point $n\in N$ is isolated in $N$.\end{case}
\begin{proof}[Proof for Case \ref{2}]
Fix $N$ as above. Since $M$ is bounded, there is $D > 0$ such that, for every $x,y\in M$, we have $d(x,y)\leq D$. Since $N$ does not contain any nontrivial Cauchy sequence, there is an infinite subset $P\subset N$ which is uniformly discrete; i.e., there is $C > 0$ such that, for every $x,y\in P$ with $x\neq y$, we have $d(x,y)\geq C$ (this is a simple exercise using the classical Ramsey theorem, see e.g. \cite[Excercise 5.5]{prot}). Fix a one-to-one sequence $(a_n)_{n=0}^\infty$ of points from $P$ and, for every $n\in\en$, put $y_n = a_n$ and $x_n = a_0$. It remains to verify that the sequence $(x_n,y_n)_{n\in\en}$ satisfies the assumptions of Lemma \ref{l:seq} with $K = \min\{\tfrac{C}{2D},1\}$. It is clear that conditions \eqref{neq} and \eqref{cond} are satisfied, because $K \leq 1$. Moreover, for every $n\in\en$, we have $K\cdot d(x_n,y_n) \leq KD \leq C/2$; hence, $U(y_n,K\cdot d(x_n,y_n))\subset U(y_n,C/2)$ and, since $(y_n)_{n\in\en}$ is $C$-discrete, the balls are pairwise disjoint. This verifies condition \eqref{lin} from Lemma \ref{l:seq}.
\end{proof}
\begin{case}\label{3}$M$ is bounded and it contains infinitely many limit points.\end{case}
\begin{proof}[Proof for Case \ref{3}]
First, let us assume there is a sequence $(y_n)_{n\in\en}$ consisting of limit points in $M$ with $y_n\to y$. Then, for each $n\in\en$, put $r_n : = \operatorname{dist}(y_n,\{y_m\setsep m\neq n\}) > 0$ and pick some $x_n\in U(y_n,r_n/2)$ with $x_n\neq y_n$. Then it is easy to see that the sequence $(x_n,y_n)_{n\in\en}$ satisfies the assumptions of Lemma \ref{l:seq} with $K = 1$.

Otherwise, the set $N$ consisting of all the limit points in $M$ satisfies the assumptions of Case \ref{2}.
\end{proof}
Note that now it remains to handle the case when $M$ is compact and it contains a nontrivial convergent sequence consisting of isolated points. Indeed, by the already proven Cases \ref{1}-\ref{3}, we may assume $M$ is bounded and contains only finitely many limit points. Then either $M$ is compact, or there is an infinite closed set of isolated points in $M$ and we may apply Case \ref{2}.
\begin{case}\label{4}$M$ is compact and it contains a nontrivial convergent sequence consisting of isolated points.\end{case}
\begin{proof}[Proof for Case \ref{4}]
Let $(a_n)_{n\in\en}$ be a nontrivial convergent sequence consisting of isolated points with the limit point $a$. It is easy to construct by induction a sequence $(x_n,y_n)_{n\in\en}$ of pairs of points from $M$ with
\begin{itemize}
	\item[(a)] $\forall n\in\en:\quad y_n\in \{a_n:n\in\en\}$ and $y_n\notin\{y_m\setsep m<n\}\cup\{x_m\setsep m<n\}$,
	\item[(b)] $\forall n\in\en:\quad d(y_n,a) < \min\{d(y_n,y_m)\setsep m<n\}$ and
	\item[(c)] for every $n\in\en$, we pick $x_n$ to be any point with $d(x_n,y_n) = \operatorname{dist}(y_n,M\setminus\{y_n\})$.	
\end{itemize}
Now, having such a sequence $(x_n,y_n)_{n\in\en}$, it remains to check that it satisfies the assumptions of Lemma \ref{l:seq} with $K = 1$. Obviously, \eqref{neq} is satisfied. Moreover, we have $d(x_n,y_n)\leq d(y_n,x)$ for every $x\in M\setminus\{y_n\}$ and so in order to verify \eqref{cond}, it is enough to observe that, for $n,m\in\en$, we have $x_n\neq y_m$. This follows from (a) for $n < m$, from (c) for $n = m$ and from (b) for $n > m$, because in the last case we have $d(x_n,y_n)\leq d(y_n,a) < d(y_n,y_m)$.

It remains to verify \eqref{lin}. But this is easy, because, for every $n\in\en$, by the choice of $x_n$ we have $U(y_n,d(x_n,y_n)) = \{y_n\}$.
\end{proof}
Since the cases mentioned above cover all the possibilities, this completes the proof of Theorem \ref{t:fin}.
\end{proof}

\begin{proof}[Proof of Theorem \ref{t:structure}]This is a consequence of Theorem \ref{t:fin}. Indeed, it is a classical result that, for every Banach space $X$, $\ell_\infty\hookrightarrow X^*$ if and only if $\ell_1$ is isomorphic to a complemented subspace of $X$ \cite[Theorem 4]{besPe}; hence, \eqref{l1c} follows. Since any complemented subspace of a $\C(K)$ space contains $c_0$, see e.g. \cite[Theorem 5.1]{ros}, from \eqref{l1c} we get \eqref{notInCK} because $c_0$ is not isomorphic to a subspace of $\ell_1$. Since the dual space contains $c_0$, it is not weakly sequentially complete and so it is not isomorphic to $L^1(\mu)$ \cite[Corollary III.C.14]{woj}. Therefore, $X$ is not isomorphic to any $L^1$-predual; in particular, not to the Gurari\u{\i} space \cite{gur}; see also \cite[Theorem 2.17]{gar}. As it is well known that $\ell_1$ is projectively universal, i.e., for any separable Banach space $Y$ there exists a bounded linear operator from $\ell_1$ onto $Y$, the same is true for $X$ since $\ell_1$ is complemented there.
\end{proof}

\begin{remark}From the assumptions of Lemma \ref{l:seq} it is possible not only to deduce that $\ell_1$ is isomorphic to a complemented subspace of $\F(M)$, but it is even possible to describe relatively easily this subspace. Let us assume that $(x_n,y_n)_{n\in\en}$ is as in Lemma \ref{l:seq}. For each $n\in\en$, put $e_n:= \tfrac{\delta_{y_n} - \delta_{x_n}}{d(y_n,x_n)}\in\F(M)$. Then, using similar proof as in Lemma \ref{l:seq} we get that $(e_n)_{n\in\en}$ is $2/K$-equivalent to the $\ell_1$ basis. Moreover, consider functions $(f_n)_{n\in\en}$ from the proof of Lemma \ref{l:seq} and define $r:M\to \F(M)$ by $r(x):=\sum_{n\in\en} f_n(x)e_n$, $x\in M$. Then it is possible to verify that $r$ is a $2/K$-Lipschitz. Using the universal property of $r$ we find $P:\F(M)\to\F(M)$ with $P\circ\delta = r$ and $\|P\|\leq 2/K$. Finally, one can verify that $P$ is actually a projection onto $\overline{\operatorname{span}}\{e_n\setsep n\in\en\}$.
\end{remark}

\section{Embedding into $L_1$}

The purpose of this section is to prove Theorem \ref{t:example}. In order to prove it, we will need the following result. The proof is just a modification of the arguments from \cite{kis}.

\begin{thm}\label{t:notEmbed}For any measure $\mu$, $\F([0,1]^2)\not\hookrightarrow L_1(\mu)$.
\end{thm}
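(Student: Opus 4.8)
The plan is to show that $\F([0,1]^2)$ does not embed into any $L_1(\mu)$ by exhibiting, inside $[0,1]^2$, finite configurations of points whose free-space geometry violates a metric inequality that every subset of $L_1$ must satisfy. The natural candidate is a negative-type / hypermetric type obstruction: $L_1$ metric spaces embed isometrically into $L_1$, and more to the point, a Banach space embeds isomorphically into $L_1(\mu)$ only if it has cotype $2$ with good constants, or — following Kislyakov's approach referenced here — only if certain averaging inequalities over grids hold with constants independent of the grid size. So the first step is to recall from \cite{kis} the precise quantitative invariant that fails: one sets up, for each $n$, the $n\times n$ grid $G_n\subset[0,1]^2$ and studies the norm in $\F(G_n)$ (equivalently, via Kantorovich duality from Section \ref{s:2}, the optimal-transport norm) of a carefully chosen mean-zero molecule supported on $G_n$.

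The key steps, in order, would be: (1) Fix the $n\times n$ dyadic grid in $[0,1]^2$ and, using the isometric embedding $\F(G_n)\hookrightarrow\F([0,1]^2)$ noted in Section \ref{s:2}, reduce to proving a uniform lower bound for a family of operators on the finite-dimensional spaces $\F(G_n)$. (2) Build the relevant elements of $\F(G_n)$: these will be signed combinations $\mu_n=\sum \varepsilon_{ij}\delta_{p_{ij}}$ of grid points with alternating signs (a ``checkerboard'' pattern or a discrete analogue of a function with many oscillations), chosen so that the $\F$-norm (the minimal-cost transport) grows like $n$ — this uses the planar grid structure: transporting mass across a checkerboard on an $n\times n$ grid costs on the order of $n$ because the mass is genuinely spread out in two dimensions. (3) On the other hand, test these elements against a bounded family of Lipschitz functions (elements of $\Lip_0 = \F^*$) built from the coordinate functions and their dyadic refinements, to show that if $\F([0,1]^2)$ embedded into $L_1(\mu)$ with isomorphism constant $C$, then Kislyakov's inequality — the failure of which for $L_1$ is the crux — would force a bound contradicting the growth rate from step (2). (4) Let $n\to\infty$ to get the contradiction. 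Throughout one invokes the internal Kantorovich–Rubinstein description of $\Norm$ to make the transport estimates rigorous.

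The main obstacle I expect is step (2)/(3): getting matching two-sided estimates for the $\F(G_n)$-norm of the test elements, and in particular pinning down exactly which averaging inequality valid in $L_1$ is violated. The one-dimensional analogue famously does \emph{not} produce an obstruction — $\F([0,1])\cong L_1$ — so the argument must genuinely exploit that the grid is two-dimensional; the discrete objects one chooses have to detect the difference between $\F([0,1])$ and $\F([0,1]^2)$. Concretely, the delicate point is that the lower transport bound of order $n$ for the checkerboard molecule needs a clean combinatorial/geometric proof (e.g. via a dual Lipschitz function or a flow/cut argument on the grid graph), and simultaneously one must verify that inside $L_1$ the corresponding sum of $\sim n^2$ unit-type vectors cannot have norm as large as $\sim n$, which is precisely where the cotype-$2$ behaviour of $L_1$, or Kislyakov's refinement thereof, enters. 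Adapting \cite{kis} means transcribing his grid argument — originally phrased for a related space — into the optimal-transport language of $\F$, so the bulk of the work is bookkeeping: matching his normalizations to the metric on $[0,1]^2$ and checking the constants stay uniform in $n$.

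Once Theorem \ref{t:notEmbed} is in hand, Theorem \ref{t:example} will follow by a standard ``spreading-off a convergent sequence'' construction: one arranges scaled copies of finer and finer grids $G_n$ converging to a single accumulation point, so that the resulting countable compact metric space $K$ contains, isometrically up to uniformly bounded distortion, each $G_n$; since $\F$ of a subset embeds into $\F$ of the whole space (Section \ref{s:2}), $\F(K)$ contains uniformly the spaces $\F(G_n)$, hence cannot embed into $L_1$ either. But that deduction is the easy part; the analytic heart is Theorem \ref{t:notEmbed} above.
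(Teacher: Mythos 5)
There is a genuine gap at the heart of your plan: steps (2)--(3) as described do not produce a contradiction, and the inequality that is supposed to be ``valid in $L_1$ but violated by planar transport'' is never identified --- and the candidates you name (negative type, cotype $2$) cannot supply it. Concretely, take the checkerboard molecule $\mu_n=\sum_{i,j}(-1)^{i+j}\delta_{(i/n,j/n)}$ on the $n\times n$ grid. Its free-space norm is indeed $\Theta(n)$ (upper bound by matching adjacent cells, lower bound by testing against $f(x,y)=\frac{1}{2\pi n}\cos(\pi nx)\cos(\pi ny)$). But if $T:\F([0,1]^2)\to L_1(\mu)$ were a $C$-isomorphic embedding, then $\|T\mu_n\|_{L_1}$ would simply lie between $\|\mu_n\|/C$ and $C\|\mu_n\|$, i.e.\ it would also be $\Theta(n)$; writing $\mu_n$ as a sum of $n^2/2$ dipoles of norm $1/n$ shows the triangle inequality in $L_1$ already permits norm $\sim n$, so nothing is contradicted. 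Negative-type/hypermetric inequalities are constraints on the metric on the points $\delta_x$ themselves, and that metric is just the Euclidean metric on $[0,1]^2$, which embeds isometrically into $L_1$; cotype $2$ gives a \emph{lower} bound on averages over random signs, which is the wrong direction. The obstruction genuinely lives in the linear structure of $\F([0,1]^2)$, not in any finite point configuration tested against the $KR$-norm, and the known proofs (Kislyakov, and Naor--Schechtman's quantitative refinement) do not proceed on the primal side at all.

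What the paper actually does is dualize: if $\F(I)\hookrightarrow L_1(\mu)$ for $I=[0,1]^2$, then $L_\infty(\mu)$, which is a $\C(K)$ space, maps boundedly \emph{onto} $\Lip_0(I)$. The contradiction then comes from absolutely summing operators: by Rademacher's theorem one has an ``identity'' map $f\mapsto[f]$ from $\Lip_0(I)$ into the Sobolev space $W^{1,1}(I)$, and this map factors as a bounded operator into $L_\infty(I)\oplus_1 L_\infty(I)\oplus_1 L_\infty(I)$ (the function and its two partials) followed by the formal identity into the corresponding $\ell_1$-sum of $L_1(I)$'s, which is absolutely summing by Grothendieck's theorem; hence $\Lip_0(I)\to W^{1,1}(I)$ is absolutely summing, and the rest is Kislyakov's argument (his Theorem~3, originally for $\C^1(I)$ in place of $\Lip_0(I)$) showing that no quotient map from a $\C(K)$ space can coexist with such an absolutely summing ``Sobolev embedding''. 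This is where two-dimensionality enters --- $W^{1,1}$ of a square behaves differently from $W^{1,1}$ of an interval --- and it is precisely the mechanism your sketch leaves as an unresolved ``delicate point''. If you want to salvage a primal-side argument you would essentially have to re-derive this operator-theoretic fact, so I would recommend restructuring the proof around the surjection $\C(K)\twoheadrightarrow\Lip_0(I)$ and the absolutely summing factorization instead.
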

\begin{proof}In order to shorten our notation, put $I: = [0,1]^2$. If there is a measure $\mu$ with $\F(I)\hookrightarrow L_1(\mu)$, then there is a continuous linear mapping from $L_\infty(\mu)$ onto $\Lip_0(I)$. Since $L_\infty(\mu)$ is a commutative C*-algebra, there exists a compact Hausdorff space $K$ such that $L_\infty(\mu)$ is isometric to $\C(K)$. Hence, it suffices to show that there does not exist a bounded linear mapping $T: \C(K)\to \Lip_0(I)$ which is onto. We only show that the ``identity'' mapping $id:\Lip_0(I)\to W^{1,1}(I)$ is absolutely summing. Then the rest can be proved just copying line by line the arguments from \cite[Theorem 3]{kis}, where this statement is proved for the space $\C^1(I)$ instead of $\Lip_0(I)$ using the fact that `identity'' mapping $id:\C^1(I)\to W^{1,1}(I)$ is ``absolutely summing'' ($W^{1,1}(I)$ is the Sobolev space).  So consider the ``identity'' mapping $id:\Lip_0(I)\to W^{1,1}(I)$. More precisely, having a Lipschitz function $f$, we denote by $[f]$ the equivalence class containing all the functions which are equal to $f$ almost everywhere. The ``identity'' mapping is the mapping $f\mapsto [f]$. By the classical Rademacher's theorem, see e.g. \cite{nekZaj}, every Lipschitz function defined on $I$ is almost everywhere differentiable and so it is possible to put $\|[f]\|_W: = \int_{[0,1]^2} (|f(x,y)| + |\partial_1f(x,y)| + |\partial_2f(x,y)|)\dy \dx$. It is immediate that $\|[f]\|_W\leq 3\|f\|_{\Lip}$ and it remains to show that the mapping $f\mapsto [f]$ is absolutely summing; i.e., there is a constant $C$ such that whenever $(f_i)_{i=1}^m$ are functions from $\Lip_0(I)$, then
$$\sum_{i=1}^m \|[f_i]\|_W\leq C\sup\{\sum_{i=1}^m|x^*(f_i)|\setsep x^*\in \Lip_0(I)^ *, \|x^*\|\leq 1\}.$$
Let us define $\Phi:\Lip_0([0,1]^2)\to L_\infty(I)\oplus_1 L_\infty(I)\oplus_1 L_\infty(I)$ by $\Lip_0(I)\ni f\mapsto \Phi(f): = (f,\partial_1 f,\partial_2 f)$. Note that $\Phi$ is a linear bounded operator. Further, consider $\Psi :L_\infty(I)\oplus_1 L_\infty(I)\oplus_1 L_\infty(I)\to  L_1(I)\oplus_1 L_1(I)\oplus_1 L_1(I)$ defined as the identity. It is a standard fact, see e.g. \cite[Remark 8.2.9]{alKal}, that the identity operator from $L_\infty(I)$ to $L_1(I)$ is absolutely summing; hence, $\Psi$ is absolutely summing. It is a classical fact that composition of a bounded operator with an absolutely summing one is absolutely summing, see e.g. \cite[Proposition 8.2.5]{alKal}. Hence, $id = \Psi\circ \Phi$ is absolutely summing.
\end{proof}

\begin{remark}The result that $\F(\er^2)\not\hookrightarrow L_1$ is often mentioned as a result of A. Naor and G. Schechtmann \cite{naoSch}. The proof above shows that, using minor modifications, it actually follows already from \cite{kis}.
\end{remark}

The rest of this section is devoted to the proof of Theorem \ref{t:example}. First, we construct the countable compact space with one accumulation point and then in a series of claims we prove the statement.\\

For every $n\geq 2$, let $(A_n,d_n)$ be the set $\{(\frac{i}{n^2},\frac{j}{n^2})\setsep 0\leq i,j\leq n\}$ equipped with the Euclidean distance $d_n$ inherited from $\er^2$. Denote by $K$ the amalgamated metric sum of $A_n$'s over $0$. That is, we take $K$ to be the disjoint union $\coprod_n A_n$ with the zero element $(0,0)$ identified in all of them. The metric $d$ on $K$ is defined as follows. For $a,b\in K$ we set
$$d(a,b)=\begin{cases}
d_n(a,b) & \exists n (a,b\in A_n),\\
d_n(a,0)+d_m(b,0) & a\in A_n, b\in A_m, n\neq m.\\
\end{cases}$$

It is easy to check that $K$ is a countable compact metric space; in fact, it is a convergent sequence, i.e., it has only one accumulation point - the zero.
\begin{claim}\label{claimKaufmann}
$\F(K)$ is isometric to $\bigoplus_{\ell_1} \F(A_n)$.
\end{claim}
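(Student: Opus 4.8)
The plan is to exhibit an explicit isometric isomorphism between $\F(K)$ and $\bigoplus_{\ell_1}\F(A_n)$ by using the universal property of Lipschitz-free spaces in both directions, and then checking that the two resulting maps are mutually inverse and norm-preserving. The key structural fact to exploit is that $K$ is obtained by gluing the spaces $A_n$ at the common base point $0$, so a Lipschitz function on $K$ vanishing at $0$ is nothing but an arbitrary choice of Lipschitz functions on each $A_n$ vanishing at $0$, with the Lipschitz constant on $K$ being the supremum of the Lipschitz constants on the pieces (this is exactly why $\Lip_0(K) \cong \bigoplus_{\ell_\infty}\Lip_0(A_n)$, and dualizing an $\ell_\infty$-sum gives an $\ell_1$-sum). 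Since the construction and its universal property are the cleanest tool available, I would work with $\F$ directly rather than passing through $\Lip_0$.

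First I would build the map $\Phi\colon \bigoplus_{\ell_1}\F(A_n)\to\F(K)$. Each inclusion $A_n\hookrightarrow K$ is an isometric embedding fixing $0$, so by the functoriality already noted in Section~\ref{s:2} it induces a linear isometric embedding $\iota_n\colon\F(A_n)\to\F(K)$. Define $\Phi$ on the dense subspace of finitely supported elements by $\Phi\bigl((a_n)_n\bigr) = \sum_n \iota_n(a_n)$; one checks $\|\Phi((a_n)_n)\| \le \sum_n\|a_n\|$ by the triangle inequality, so $\Phi$ extends to a norm-one operator on the whole $\ell_1$-sum. Next I would build a map in the other direction. Consider the map $L\colon K\to\bigoplus_{\ell_1}\F(A_n)$ sending a point $x\in A_n$ to $\delta_x\in\F(A_n)$ placed in the $n$-th coordinate (and $0\mapsto 0$, which is consistent since the base points are identified). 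The point is that $L$ is $1$-Lipschitz: if $x,y$ lie in the same $A_n$ then $\|L(x)-L(y)\| = \|\delta_x-\delta_y\|_{\F(A_n)} = d_n(x,y) = d(x,y)$; if $x\in A_n$, $y\in A_m$ with $n\neq m$ then $L(x)-L(y)$ has the element $\delta_x - \delta_0$ in coordinate $n$ and $\delta_0-\delta_y$ in coordinate $m$, so its $\ell_1$-norm is $d_n(x,0)+d_m(y,0) = d(x,y)$. By the universal property of $\F(K)$, $L$ extends to a linear map $\Psi = \widehat{L}\colon\F(K)\to\bigoplus_{\ell_1}\F(A_n)$ with $\|\Psi\|\le 1$.

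Finally I would verify $\Phi$ and $\Psi$ are mutually inverse: both compositions $\Psi\circ\Phi$ and $\Phi\circ\Psi$ are bounded linear maps that fix every generator $\delta_x$ (equivalently every $\iota_n(\delta_x)$), hence are the identity on a dense subspace and therefore everywhere. Since $\Phi$ and $\Psi$ both have norm at most $1$ and are inverse to each other, each must be an isometry, which gives the claim. I do not expect a genuine obstacle here; the only point requiring a little care is the computation showing $L$ is $1$-Lipschitz across two different pieces $A_n,A_m$, where one must write $L(x)-L(y)$ as a two-coordinate element and recognize its $\ell_1$-norm as $d_n(x,0)+d_m(y,0)$, matching the definition of $d$ on $K$ — and, relatedly, making sure the identification of base points is handled consistently so that everything is genuinely well defined on the amalgam. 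Everything else is routine bookkeeping with the universal property.
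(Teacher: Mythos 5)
Your proof is correct, but it takes a different route from the paper's. The paper disposes of this claim by citation (to \cite[Proposition 5.1]{kaufmann-preprint}) and sketches the argument in Remark \ref{r:preprintVsPublished} entirely on the dual side: one defines $\Phi\colon\bigoplus_{\ell_\infty}\Lip_0(A_n)\to\Lip_0(K)$ by gluing the functions $f_n$ along the common base point, checks that it is an onto isometry \emph{and} a $w^*$-$w^*$ homeomorphism, and concludes that it is the adjoint of an isometry from $\F(K)$ onto $\bigoplus_{\ell_1}\F(A_n)$. You instead work directly on the predual side, building both maps via the universal property of the free space and checking they are mutually inverse contractions. The two arguments are essentially dual to one another, but yours has the advantage of avoiding the one genuinely delicate point of the dual approach: a surjective isometry between dual spaces is an adjoint only if it is $w^*$-$w^*$ continuous, and that continuity must be verified (it is the reason the Remark explicitly mentions the $w^*$-$w^*$ homeomorphism). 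Your version replaces that verification with the elementary computation that the point map $L$ is $1$-Lipschitz across two different pieces $A_n$, $A_m$, which you carry out correctly using $\delta_0=0$ and the definition of the amalgam metric; the rest is, as you say, routine density bookkeeping. What the paper's approach buys in exchange is that only one map needs to be written down, and the identification $\Lip_0(K)\cong\bigoplus_{\ell_\infty}\Lip_0(A_n)$ is itself a statement worth having (it is reused in Remark \ref{r:LanProch}).
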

\begin{proof}
This is easy and proved e.g. in \cite[Proposition 5.1]{kaufmann-preprint}.
\end{proof}
For every $n$, consider the set $nA_n :=\{(\frac{i}{n},\frac{j}{n})\setsep 0\leq i,j\leq n\}$ again equipped with the Euclidean distance. Clearly, $\F(nA_n)$ is isometric to $\F(A_n)$. Indeed, since both spaces are finite-dimensional, it suffices to find an isometry of their duals; the mapping $\phi:\Lip_0(A_n)\to \Lip_0(nA_n)$ defined by $\phi(f)(x): = nf(\tfrac{x}{n})$, $x\in nA_n$, $f\in \Lip_0(A_n)$ is such an isometry. As a consequence we get the following.
\begin{claim}\label{claim2}
$\F(K)$ is linearly isometric to $\bigoplus_{\ell_1} \F(nA_n)$.
\end{claim}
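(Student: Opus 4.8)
The plan is to simply chain together the two facts already in hand. By Claim~\ref{claimKaufmann}, $\F(K)$ is linearly isometric to the $\ell_1$-sum $\bigoplus_{\ell_1}\F(A_n)$. On the other hand, the displayed argument just above the claim (multiply a Lipschitz function on $A_n$ by $n$ and precompose with the dilation $x\mapsto x/n$) gives, for each $n\geq 2$, a surjective linear isometry $\phi_n:\Lip_0(A_n)\to\Lip_0(nA_n)$, $\phi_n(f)(x)=nf(x/n)$; since $A_n$ is finite, $\F(A_n)$ and $\F(nA_n)$ are finite-dimensional and reflexive, so dualizing $\phi_n$ yields a surjective linear isometry $U_n:\F(A_n)\to\F(nA_n)$.

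The remaining point is the elementary observation that the $\ell_1$-sum construction respects isometries coordinatewise: if $(X_n)$ and $(Y_n)$ are sequences of Banach spaces and $U_n:X_n\to Y_n$ are surjective linear isometries, then $U:\bigoplus_{\ell_1}X_n\to\bigoplus_{\ell_1}Y_n$, $U\big((x_n)_n\big)=(U_n x_n)_n$, is a well-defined surjective linear isometry, because $\|U((x_n)_n)\|=\sum_n\|U_n x_n\|=\sum_n\|x_n\|=\|(x_n)_n\|$. Applying this with $X_n=\F(A_n)$, $Y_n=\F(nA_n)$ and the isometries $U_n$ above shows that $\bigoplus_{\ell_1}\F(A_n)$ is linearly isometric to $\bigoplus_{\ell_1}\F(nA_n)$, and composing with the isometry of Claim~\ref{claimKaufmann} finishes the proof.

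There is essentially no obstacle here: the statement is just a bookkeeping consequence of Claim~\ref{claimKaufmann} together with the scaling isometry $\F(A_n)\cong\F(nA_n)$. The only details worth a word are that the index set is kept the same (the sum runs over $n\geq 2$, as in the definition of $K$), and that the passage from the isometry of duals $\Lip_0(A_n)\cong\Lip_0(nA_n)$ to an isometry of the free spaces themselves is legitimate precisely because each $A_n$ is finite.
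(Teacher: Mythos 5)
Your argument is correct and is exactly what the paper intends: the paper establishes the scaling isometry $\F(A_n)\cong\F(nA_n)$ (by dualizing $\phi$ and using finite-dimensionality) immediately before the claim and then states the claim as a direct consequence of Claim~\ref{claimKaufmann}. Your only addition is to spell out the routine fact that coordinatewise surjective isometries induce an isometry of $\ell_1$-sums, which the paper leaves implicit.
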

Since $nA_n$, for each $n$, is a subset of $[0,1]^2$ we may and will consider $\F(nA_n)$ as a subspace of $\F([0,1]^2)$. Notice that $\bigcup_n nA_n$ is dense in $[0,1]^2$. We need one more technical claim which says that finite dimensional subspaces of $\F([0,1]^2)$ can be approximated by finite dimensional subspaces of $\F(nA_n)$ for large enough $n$. In the following, by $d_{BM}$ we denote the Banach-Mazur distance.
\begin{claim}\label{claim5}
Let $E\subseteq \F([0,1]^2)$ be a finite dimensional subspace and let $\varepsilon>0$ be arbitrary. Then there exist $n\in\en$ and a finite dimensional subspace $E'\subseteq \F(nA_n)$ such that $d_{BM}(E,E')<1+\varepsilon$.
\end{claim}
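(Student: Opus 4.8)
The plan is to use the fact that $\bigcup_n nA_n$ is dense in $[0,1]^2$, together with the fact that elements of $\F([0,1]^2)$ can be approximated in norm by finitely supported molecules, i.e.\ by elements of $\Span\{\delta_x \setsep x \in \bigcup_n nA_n\}$. Concretely, I would first fix an Auerbach basis $v_1,\dots,v_k$ of $E$; since finitely supported elements of the form $\sum_j \beta_j(\delta_{p_j} - \delta_{q_j})$ with $p_j, q_j \in [0,1]^2$ are dense in $\F([0,1]^2)$ (this is just the definition of $\F$ as a completion), and since each point of $[0,1]^2$ can be approximated arbitrarily well by points of $\bigcup_n nA_n$, while the map $x \mapsto \delta_x$ is $1$-Lipschitz, I can find, for any $\eta > 0$, elements $w_1,\dots,w_k \in \Span\{\delta_x \setsep x \in \bigcup_n nA_n\}$ with $\|v_i - w_i\| < \eta$ for all $i$. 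Because $\bigcup_n nA_n = \bigcup_m \bigcup_{n \le m} nA_n$ is an increasing union and each $w_i$ has finite support, there is a single $m$ with $w_1,\dots,w_k \in \Span\{\delta_x \setsep x \in mA_m\} \subseteq \F(mA_m)$.

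Next I would set $E' := \Span\{w_1,\dots,w_k\} \subseteq \F(mA_m)$ and check that the linear map $\Lambda : E \to E'$ sending $v_i \mapsto w_i$ has Banach--Mazur distortion close to $1$ once $\eta$ is small. This is the standard perturbation argument: for $v = \sum_i a_i v_i \in E$ with $\|v\| \le 1$ one has $|a_i| \le 1$ (Auerbach), hence $\|\Lambda v - v\| = \|\sum_i a_i(w_i - v_i)\| \le k\eta$, so $\|\Lambda v\| \le 1 + k\eta$ and $\|\Lambda v\| \ge 1 - k\eta$; thus $\|\Lambda\| \le 1 + k\eta$ and $\|\Lambda^{-1}\| \le (1-k\eta)^{-1}$ (for $k\eta < 1$), giving $d_{BM}(E,E') \le (1+k\eta)(1-k\eta)^{-1}$, which is below $1+\varepsilon$ for $\eta$ chosen small enough in terms of $k$ and $\varepsilon$. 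One should also note $\dim E' = k$: for $k\eta$ small the $w_i$ are linearly independent since they are a small perturbation of the independent $v_i$, which also follows from invertibility of $\Lambda$.

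The only mild subtlety — and the step I would be most careful about — is the very first approximation: one must justify that finitely supported molecules supported on $\bigcup_n nA_n$ are dense in $\F([0,1]^2)$. This follows because $\Span\{\delta_x \setsep x \in [0,1]^2\}$ is dense in $\F([0,1]^2)$ by definition, and each $\delta_x$ with $x \in [0,1]^2$ is the norm-limit of $\delta_{x_j}$ with $x_j \in \bigcup_n nA_n$, $x_j \to x$, using $\|\delta_{x_j} - \delta_x\| = d(x_j, x) \to 0$ together with density of $\bigcup_n nA_n$ in $[0,1]^2$; a diagonal/triangle-inequality argument then upgrades this to density of the span on $\bigcup_n nA_n$. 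Everything else is the routine Auerbach-basis perturbation estimate, so I do not expect any real obstacle here.
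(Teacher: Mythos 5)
Your overall strategy is the same as the paper's: approximate a basis of $E$ by finitely supported elements sitting over grid points and then run a standard basis-perturbation estimate to control the Banach--Mazur distance; your Auerbach-basis version of that estimate is fine, as is the density argument for finitely supported molecules. There is, however, one step that fails as written: you claim that because $\bigcup_n nA_n = \bigcup_m\bigcup_{n\le m} nA_n$ is an increasing union, the finitely many support points of $w_1,\dots,w_k$ must lie in a single $mA_m$. The sets $nA_n$ are \emph{not} nested: $nA_n\subseteq mA_m$ holds exactly when $n$ divides $m$ (for instance $(\tfrac12,\tfrac12)\in 2A_2$ but $(\tfrac12,\tfrac12)\notin 3A_3$), so a finite subset of $\bigcup_n nA_n$ is contained in some $\bigcup_{n\le m} nA_n$ but not, for that reason, in any single grid $mA_m$. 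Since the claim requires $E'$ to sit inside one $\F(nA_n)$, this is precisely the point that needs an argument rather than an appeal to monotonicity.

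The gap is easy to close, in either of two ways. You can observe that for the finitely many indices $n_1,\dots,n_r$ occurring in the supports one has $n_jA_{n_j}\subseteq mA_m$ for $m=\operatorname{lcm}(n_1,\dots,n_r)$, so a single grid does eventually absorb them. Alternatively --- and this is what the paper does --- first approximate each basis vector by a combination of Dirac measures at \emph{arbitrary} points of $[0,1]^2$, and then, using that $nA_n$ is a $\tfrac{\sqrt2}{2n}$-net of $[0,1]^2$ together with $\|\delta_x-\delta_y\|=d(x,y)$, move all of these finitely many points into one sufficiently fine grid $nA_n$ at the cost of a second, controlled perturbation. With either repair the rest of your argument goes through unchanged and yields $d_{BM}(E,E')<1+\varepsilon$.
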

\begin{proof}
Let $e_1,\ldots,e_m$ be a basis of $E$. Since all norms on $E$ are equivalent, there is $D > 0$ such that for all $a\in\er^m$ we have $\sum_{i=1}^m |a(i)|\leq D\|\sum_{i=1}^m a(i) e_i\|$. Fix $\delta = \tfrac{\varepsilon}{2 + \varepsilon}$, i.e., such that $\tfrac{1+\delta}{1-\delta} = 1+\varepsilon$. Each $e_i$ can be $(\delta/2mD)$-approximated by some linear combination of elements from $\Span\{\delta_y:y\in [0,1]^2\}$. Without loss of generality, we may assume that for each $i\leq m$ such a linear combination is of the same length. So for each $i\leq m$, we choose some $\alpha_1^i \delta_{x_1^i}+\ldots+\alpha_l^i \delta_{x_l^i}\in \Span\{\delta_y\setsep y\in [0,1]^2\}$ such that $\|e_i-(\alpha_1^i \delta_{x_1^i}+\ldots+\alpha_l^i \delta_{x_l^i})\|<\delta/2mlD$.

Now, since $\bigcup_{n\in\en}nA_n$ is dense in $[0,1]^2$, if we take $n$ large enough then for every $i\leq m$ and $j\leq l$ we can find $a_j^i\in nAn$ such that $\|\delta_{x_j^i}-\delta_{a_j^i}\|<\frac{\delta}{2ml\alpha D}$, where $\alpha=\max\{|\alpha_j^i|\setsep j\leq l,i\leq m\}$. Consequently, for every $i\leq m$ we get
$$\|\alpha_1^i \delta_{x_1^i}+\ldots+\alpha_l^i \delta_{x_l^i}-(\alpha_1^i \delta_{a_1^i}+\ldots+\alpha_l^i \delta_{a_l^i})\|<\alpha\cdot l\cdot\frac{\delta}{2ml\alpha D}=\delta/2mD.$$
Thus, if for every $i\leq m$ we denote $\alpha_1^i \delta_{a_1^i}+\ldots+\alpha_l^i \delta_{a_l^i}$ by $e'_i$, we have $\|e_i-e'_i\|<\delta/mD$. Hence, for any $a\in\er^m$, we have $\|\sum_{i=1}^m a(i)e_i - \sum_{i=1}^m a(i)e'_i\| < \delta/D(\sum_{i=1}^m |a(i)|)\leq \delta \|\sum_{i=1}^m a(i)e_i\|$ and, consequently,
$$\|\sum_{i=1}^m a(i)e'_i\| < (1+\delta)\|\sum_{i=1}^m a(i)e_i\|\qquad\text{and}\qquad\|\sum_{i=1}^m a(i)e_i\| <\|\sum_{i=1}^m a(i)e'_i\| + \delta \|\sum_{i=1}^m a(i)e_i\|.$$
Denote by $E'$ the subspace $\Span\{e'_i:i\leq m\}\subseteq \F(nA_n)$. Using the above, the linear mapping determined by sending $e_i$ to $e'_i$, for $i\leq m$, is a witness of the fact that $d_{BM}(E,E')< \tfrac{1+\delta}{1-\delta} = 1+\varepsilon$.
\end{proof}

Let us now formulate a result of Lindenstrauss and Pe\l czy\' nski that will help us finish the proof.
\begin{thm}[Theorem 7.1 in \cite{linPel}]\label{thm:LinPel}
Let $X$ be a Banach space and fix $\lambda\geq 1$. If for every finite dimensional subspace $E$ of $X$ there exists a finite dimensional subspace $E'$ of $\ell_1$ such that $d_{BM}(E,E')\leq \lambda$, then there exists a measure $\mu$ and a subspace $Y$ of $L_1(\mu)$ such that $d_{BM}(X,Y)\leq \lambda$.
\end{thm}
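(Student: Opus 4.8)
The plan is to prove this local-to-global principle by a Banach-space ultraproduct argument, using that the class of $L_1$-spaces is stable under ultraproducts. First I would record the local data: let $\mathcal{D}$ be the set of all finite dimensional subspaces of $X$, directed by inclusion. For each $E\in\mathcal{D}$ the hypothesis gives a linear isomorphism of $E$ onto a finite dimensional subspace of $\ell_1$ of Banach--Mazur distance at most $\lambda$; multiplying it by a suitable positive scalar produces $T_E\colon E\to\ell_1$ with
$$\|x\|\le\|T_Ex\|_1\le\lambda\|x\|\qquad(x\in E),$$
where we regard $\ell_1=L_1(\nu)$ for the counting measure $\nu$ on $\en$. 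Since the tails $\{E\in\mathcal{D}\setsep E\supseteq E_0\}$, $E_0\in\mathcal{D}$, have the finite intersection property (the intersection of the tails above $E_0$ and $E_1$ contains $E_0+E_1$), I fix an ultrafilter $\mathcal{U}$ on $\mathcal{D}$ containing all of them.

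Next I would build the global embedding into the ultraproduct $U:=(\ell_1)_{\mathcal{U}}$, the quotient of the bounded families $(u_E)_{E\in\mathcal{D}}$ in $\ell_1$ by those with $\lim_{\mathcal U}\|u_E\|_1=0$, normed by $\|[(u_E)]\|=\lim_{\mathcal U}\|u_E\|_1$. Define $J\colon X\to U$ by $J(x):=[(u_E)_E]$, where $u_E:=T_Ex$ whenever $x\in E$ and $u_E:=0$ otherwise; this family is bounded by $\lambda\|x\|$, so $J(x)$ is well defined. Because $x$ belongs to every $E$ in the tail above $\Span\{x\}$, and this tail lies in $\mathcal U$, the ultralimit ignores the arbitrarily chosen values, and $\|J(x)\|=\lim_{\mathcal U}\|T_Ex\|_1$. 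For fixed $x,y$ and scalars, all of $x,y$ and their combination lie in a common tail on which each $T_E$ is linear, so $J$ is linear; and from the displayed estimate, valid for $\mathcal U$-almost every $E$, one gets $\|x\|\le\|J(x)\|\le\lambda\|x\|$. Hence $J$ is an isomorphic embedding with $d_{BM}(X,J(X))\le\lambda$.

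It remains to identify $U$ with an $L_1$-space, which is the step I expect to demand the most care. The ultraproduct of Banach lattices is again a Banach lattice under the coordinatewise order, so $U$ is one; moreover each factor $\ell_1$ satisfies the abstract $L$-space identity $\|u+v\|=\|u\|+\|v\|$ for disjoint $u,v\ge 0$, and since the lattice operations and the norm commute with ultralimits, this identity is inherited by $U$. By Kakutani's representation theorem every abstract $L$-space is linearly isometric to some $L_1(\mu)$, so there is an isometry $U\cong L_1(\mu)$. Setting $Y$ to be the image of $J(X)$ under this isometry yields a subspace $Y\subseteq L_1(\mu)$ with $d_{BM}(X,Y)=d_{BM}(X,J(X))\le\lambda$, as required; note that no separability of $\mu$ is asserted, so the possible nonseparability of the ultraproduct is harmless.
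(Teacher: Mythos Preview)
The paper does not give a proof of this statement; it is quoted as Theorem~7.1 of Lindenstrauss--Pe\l czy\'nski and used as a black box in the proof of Theorem~\ref{t:example}. So there is no ``paper's own proof'' to compare against.

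Your ultraproduct argument is correct and is a standard modern route to this local-to-global principle. Two small comments. First, in the last step you invoke the $L$-space identity $\|u+v\|=\|u\|+\|v\|$ only for \emph{disjoint} $u,v\ge 0$; in fact each factor $\ell_1$ satisfies it for \emph{all} $u,v\ge 0$, and it is this full additivity that passes straight to the ultraproduct (choose positive representatives and take the ultralimit) and plugs directly into Kakutani's representation theorem. Second, the fact that an ultraproduct of $L_1(\nu_i)$-spaces is isometrically an $L_1(\mu)$-space is recorded explicitly in Heinrich's survey on ultraproducts in Banach space theory, so you could compress the final paragraph to a citation if you prefer.

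For context, the original 1968 argument of Lindenstrauss and Pe\l czy\'nski predates the systematic use of Banach-space ultraproducts; it proceeds instead through a direct compactness/finite-representability construction. Your approach is cleaner but leans on later machinery (ultraproducts and Kakutani's theorem), which is a fair trade.
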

We are now ready to finish the proof of Theorem \ref{t:example}. By Theorem \ref{t:notEmbed} we have that $\F([0,1]^2)$ does not embed into $L_1(\mu)$ for any measure $\mu$. However, then by Theorem \ref{thm:LinPel} we get that for every $N\in \en$ there exists a finite dimensional subspace $E_N$ of $\F([0,1]^2)$ such that for every finite dimensional subspace $F$ of $\ell_1$ we have $d_{BM}(E_N,F)>N$. Using Claim \ref{claim5}, for each $N$ we can find some $n(N)\in \en$ and finite dimensional subspace $E_{n(N)}$ of $\F(n(N)A_{n(N)})$ such that $d_{BM}(E_N,E_{n(N)})<2$.

Assume now that $\F(K)$ embeds into $L_1$ via some linear embedding of norm less than $N/8$ for some $N\in \en$. By Claim \ref{claim2}, $\oplus_{\ell_1} \F(nA_n)$ embeds into $L_1$ via some linear embedding $T$ of norm less than $N/8$. Now, $T$ restricted on $E_{n(N)}\subseteq \F(n(N)A_{n(N)})$ has still norm bounded by $N/8$. In particular, there is some finite dimensional subspace $Y_{n(N)}$ of $L_1$ such that $d_{BM}(E_{n(N)},Y_{n(N)})\leq N/8$. Since $L_1$ is finitely representable in $\ell_1$, see \cite[Proposition 11.1.7]{alKal}, there exists a finite dimensional subspace $Y_N$ of $\ell_1$ such that $d_{BM}(Y_N,Y_{n(N)})<2$.

Now putting all these inequalities together we get
$$\frac{N}{8}\geq d_{BM}(E_{n(N)},Y_{n(N)})\geq \frac{d_{BM}(E_N,Y_N)}{d_{BM}(E_N,E_{n(N)})\cdot d_{BM}(Y_N,Y_{n(N)})} > \frac{N}{4}$$
and that is a contradiction finishing the proof.

\begin{remark}\label{r:preprintVsPublished}The paper \cite{kaufmann-preprint} was published (see \cite{kaufmann-published}). However, in the published version the statement \cite[Proposition 5.1]{kaufmann-preprint}, which we cite in the proof of Claim \ref{claimKaufmann} above is missing. Thus, we would like to sketch the easy proof of it here.\\[5pt]
	Consider
	$$\Phi:\bigoplus_{\ell_\infty} \Lip_0(A_n)\to \Lip_0(K)$$
	defined by $\Phi\big((f_n)\big)(x) = f_n(x)$, $(f_n)\in \bigoplus_{\ell_\infty} \Lip_0(A_n)$, $x\in A_n$. Then it is easy to verify that $\Phi$ is an isometry onto and $w^*-w^*$ homeomorphism. Hence, it is the adjoint of an isometry from $\F(K)$ onto $\bigoplus_{\ell_1} \F(A_n)$.
	\end{remark}

	\begin{remark}\label{r:LanProch}It has been observed by G. Lancien and A. Proch\'azka that our method of proof actually gives that $K$ from the statement of Theorem \ref{t:example} can be taken as a subset of $[0,1]^2$ and that there does not exist a bi-Lipschitz embedding of $\F(K)$ into $L_1$. Let us sketch the argument here.\\[5pt]	
	First, the only place where we used the metric of $K$ was to prove Claim \ref{claimKaufmann}. However, it is easy to see that taking a sequence $(k_n)_{n\in\en}$ increasing fast enough, we have that $\F(\bigcup_{n\in\en}A_{k_n})$ is linearly isomorphic to $\bigoplus_{\ell_1} \F(A_{k_n})$ (using the same mapping $\Phi$ as in Remark \ref{r:preprintVsPublished}), which would be enough for the rest of the proof. Hence, we may have $K = \bigcup_{n\in\en}A_{k_n}$. Moreover, our proof gives that $\F(K)$ does not linearly embed into any Banach space finitely representable in $\ell_1$. If there was a bi-Lipschitz embedding of $\F(K)$ into $L_1$ then, by \cite[Corollary 7.10]{bl}$, \F(K)$ embeds linearly into $(L_1)^{**}$ which is by the principle of local reflexivity \cite[Theorem 11.2.4]{alKal} finitely represented in $L_1$ (in particular, $(L_1)^{**}$ is finitely represented in $\ell_1$ because $L_1$ is), a contradiction.
	\end{remark}

\section{Embedding of $c_0$}\label{s:c0}

Let $M$ be a separable metric space that contains a bi-Lipschitz copy of every separable metric space. By \cite[Theorems 2.12 and 3.1]{gk}, we have $X\hookrightarrow \F(X)$ for every separable Banach space $X$; therefore, $\F(M)$ is a universal separable Banach space, i.e., $\F(M)$ contains an isomorphic copy of every separable Banach space. Note that by the result of Aharoni \cite{aha} this is equivalent to the condition that $M$ contains a bi-Lipschitz copy of $c_0$. Y. Dutrieux and V. Ferenczi in \cite{dutFer} asked for the converse, see Question \ref{q:1}. In this section we prove Theorem \ref{t:c0}, making a partial progress towards the answer to this question.\\

Let $M$ be either $[0,1]^n$ or $\er^n$. By $\C^1(M)$ we denote the space of functions $F:M\to\er$ whose derivatives of order $\leq$1 are continuous on $M$. For $F\in\C^1(M)$ we define $\|F\|^1_\infty: = \max\{\|F\|_\infty, \|\partial_{x_i}F\|_\infty\setsep i\leq n\}$. It is well-known that the space $(\C^1([0,1]^n),\|\cdot\|^1_\infty)$ is a Banach space.

The following result was essentially proved by J. Bourgain \cite{bourg1}, \cite{bourg2}. The result of J. Bourgain concerns the space of smooth functions over $n$-dimensional torus; however, the same proof works for the $n$-dimensional cube. We refer also to \cite{woj}, where a more detailed proof of the result of J. Bourgain may be found (use Example III.D.30 and Theorem III.D.31  and conclude similarly as in the proof of Corollary III.C.14).
\begin{thm}\label{t:wscC1}For every $n\in\en$, the Banach space $(\C^1([0,1]^n))^*$ is weakly sequentially complete, i.e., weakly Cauchy sequences are weakly convergent.
\end{thm}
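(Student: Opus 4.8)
\emph{Plan.} The plan is to reduce the statement for the cube to the case of the torus, where it is a theorem of Bourgain, and to organize the torus case around Pe\l czy\'nski's property $(V)$. I will use two standard facts freely: weak sequential completeness passes to closed (in particular complemented) subspaces; and if a Banach space $X$ has property $(V)$ of Pe\l czy\'nski, then $X^*$ is weakly sequentially complete.

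\emph{Step 1: from the cube to the torus.} Identify $[0,1]^n$, up to an isomorphism of the associated $\C^1$-spaces, with the subcube $[0,\tfrac12]^n$, which sits inside $\mathbb{T}^n=\er^n/\mathbb{Z}^n$. There is a bounded linear $\C^1$-extension operator $\C^1([0,\tfrac12]^n)\to\C^1(\er^n)$, obtained from the classical one-sided extension across a face ($\tilde f(x',x_n)=3f(x',-x_n)-2f(x',-2x_n)$ for $x_n<0$, matching value and first derivative at $x_n=0$) applied face by face; composing with multiplication by a fixed $\C^1$ bump supported in a fundamental domain produces extensions lying in $\C^1(\mathbb{T}^n)$. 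Thus the restriction map $\C^1(\mathbb{T}^n)\to\C^1([0,\tfrac12]^n)$ is a bounded surjection with a bounded linear right inverse, and its adjoint exhibits $(\C^1([0,1]^n))^*$ as a complemented subspace of $(\C^1(\mathbb{T}^n))^*$. By hereditarity of weak sequential completeness it therefore suffices to treat $\mathbb{T}^n$. (Alternatively, as indicated in \cite{woj}, one may simply rerun Bourgain's proof on the cube, since everything in it is local; that route makes Step 1 unnecessary but moves the same content into Step 2.)

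\emph{Step 2: the torus case.} Here I would invoke \cite{bourg1,bourg2} (see also \cite{woj}, Example III.D.30 and Theorem III.D.31, concluding as in the proof of \cite[Corollary III.C.14]{woj} that $L_1$ is weakly sequentially complete). In the property $(V)$ formulation one must show that every non--weakly-compact bounded operator on $\C^1(\mathbb{T}^n)$ is an isomorphism on a subspace isomorphic to $c_0$. The geometric starting point is the isometric embedding $F\mapsto(F,\partial_{x_1}F,\dots,\partial_{x_n}F)$ of $(\C^1(\mathbb{T}^n),\|\cdot\|^1_\infty)$ into $\C(\mathbb{T}^n)\oplus_\infty\C(\mathbb{T}^n)^{n}$, a finite $\ell^\infty$-sum of $\C(K)$-spaces, which has property $(V)$. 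However, property $(V)$ is \emph{not} inherited by subspaces, so the heart of the matter is Bourgain's construction, which extracts from a failure of weak compactness a sequence of $\C^1$ functions supported on pairwise disjoint small cubes that spans a copy of $c_0$ on which the operator stays bounded below, using the special position of $\C^1$ among the smooth function spaces. I expect this extraction — equivalently, the description of the relatively weakly compact subsets of $(\C^1(\mathbb{T}^n))^*$, or of the $w^*$-limits in the bidual of bounded sequences from it, along the lines of the $L_1$ argument in \cite{woj} — to be the main obstacle. The cube-to-torus reduction of Step 1, and the verification that Bourgain's estimates do not distinguish the flat torus from a subcube, are by comparison routine bookkeeping.
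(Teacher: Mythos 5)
Your proposal is correct and, at bottom, rests on the same external input as the paper: Bourgain's theorem that $\C^1$ of the torus has Pe\l czy\'nski's property $(V)$, hence a weakly sequentially complete dual. Neither you nor the paper reproves that theorem --- the paper's entire ``proof'' is the sentence preceding the statement, which cites \cite{bourg1,bourg2}, asserts that ``the same proof works for the $n$-dimensional cube,'' and points to \cite{woj} (Example III.D.30, Theorem III.D.31, and the argument of Corollary III.C.14) for details. Where you genuinely differ is in how the cube is connected to the torus: instead of re-running Bourgain's argument on the cube, you build a bounded linear extension operator $\C^1([0,\tfrac12]^n)\to\C^1(\mathbb{T}^n)$ (Hestenes reflection face by face, then a $\C^1$ cut-off supported in a fundamental domain), which exhibits $(\C^1([0,1]^n))^*$ as a complemented subspace of $(\C^1(\mathbb{T}^n))^*$ and lets you quote the torus theorem as a black box via hereditarity of weak sequential completeness. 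That reduction is correct and standard, and it buys you the right to cite Bourgain's result verbatim rather than having to verify that his estimates are local; the paper's route avoids the extension operator but leaves that verification to the reader. Your Step 2 also correctly flags the one real difficulty --- extracting a $c_0$-subspace on which a non-weakly-compact operator is bounded below --- and correctly notes that it cannot be obtained merely from the isometric embedding of $\C^1$ into a finite $\ell_\infty$-sum of $\C(K)$-spaces, since property $(V)$ does not pass to subspaces. So the proposal is acceptable to exactly the same degree as the paper's own treatment: both ultimately defer the substance to \cite{bourg1,bourg2}.
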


\begin{lemma}\label{l:rozsir}Let $A\subset \er^n$ be a finite set and $f:A\to \er$ a 1-Lipschitz function (on $\er^n$ we consider Euclidean norm). Then, for every $\varepsilon > 0$, there exists $g\in \C^1(\er^n)$, an extension of $f$ (i.e., $g\supset f$), with $\|g\|^1_\infty < \max\{\|f\|_\infty, 1\} + \varepsilon$.
\end{lemma}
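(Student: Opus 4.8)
The plan is to extend $f$ first to a bounded $1$-Lipschitz function on all of $\er^n$, then mollify it to obtain $C^\infty$ smoothness while keeping both the sup-norm and the Lipschitz constant under control, and finally add a small $C^\infty$ correction supported in tiny balls around the points of $A$ to repair the small interpolation error introduced by the mollification.

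First I would take the \emph{clipped McShane extension}: set $\tilde f(x):=\min_{a\in A}\big(f(a)+|x-a|\big)$, which is a $1$-Lipschitz function on $\er^n$ with $\tilde f|_A=f$ (here $1$-Lipschitzness of $f$ is used), and then $h:=\tau\circ\tilde f$, where $\tau(t):=\max\{-\|f\|_\infty,\min\{\|f\|_\infty,t\}\}$ is the $1$-Lipschitz truncation to $[-\|f\|_\infty,\|f\|_\infty]$. Then $h$ is $1$-Lipschitz, $\|h\|_\infty\le\|f\|_\infty$, and $h|_A=f$ since $|f(a)|\le\|f\|_\infty$ for $a\in A$. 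Now fix a mollifier $\phi_\delta\ge 0$ with $\int_{\er^n}\phi_\delta=1$ and $\operatorname{supp}\phi_\delta\subset B(0,\delta)$, and put $h_\delta:=h*\phi_\delta\in C^\infty(\er^n)$. Convolution with a probability density increases neither the sup-norm nor the Euclidean Lipschitz constant, so $\|h_\delta\|_\infty\le\|f\|_\infty$ and $\|\partial_{x_i}h_\delta\|_\infty\le\|\nabla h_\delta\|_\infty\le 1$; moreover $\|h_\delta-h\|_\infty\le\delta$ because $h$ is $1$-Lipschitz and $\phi_\delta$ is supported in $B(0,\delta)$. In particular $|h_\delta(a)-f(a)|\le\delta$ for every $a\in A$.

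To repair the interpolation error, let $r_0:=\min\big(\{1\}\cup\{|a-b|\setsep a\neq b\in A\}\big)>0$, where finiteness of $A$ is used, and for each $a\in A$ choose $\psi_a\in C^\infty(\er^n)$ with $\psi_a(a)=1$, $0\le\psi_a\le 1$, $\operatorname{supp}\psi_a\subset B(a,r_0/2)$, and $\|\nabla\psi_a\|_\infty\le C_n/r_0$ for a constant $C_n$ depending only on $n$. Set
$$g:=h_\delta+\sum_{a\in A}\big(f(a)-h_\delta(a)\big)\psi_a\in C^\infty(\er^n).$$
The balls $B(a,r_0/2)$ are pairwise disjoint and contain no other point of $A$, so at each $b\in A$ only the $a=b$ summand survives and $g(b)=h_\delta(b)+\big(f(b)-h_\delta(b)\big)=f(b)$; thus $g\supset f$. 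Since at every point of $\er^n$ at most one summand of the correction is nonzero, that correction has sup-norm $\le\delta$ and each of its partial derivatives bounded by $\delta C_n/r_0$; combining with the previous paragraph, $\|g\|_\infty\le\|f\|_\infty+\delta$ and $\|\partial_{x_i}g\|_\infty\le 1+\delta C_n/r_0$, whence $\|g\|^1_\infty\le\max\{\|f\|_\infty,1\}+\delta\max\{1,C_n/r_0\}$. Since $\varepsilon$, $A$, $r_0$, and the $\psi_a$ are all fixed before $\delta$, it suffices to choose $\delta$ so small that $\delta\max\{1,C_n/r_0\}<\varepsilon$.

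The one step needing genuine care is this last interplay: the geometry of the finite set $A$ forces the correction bumps onto balls of radius $\le r_0/2$, so their gradients have size $\sim 1/r_0$ and cannot be shrunk, but the coefficients $f(a)-h_\delta(a)$ are $O(\delta)$ and $\delta$ is at our disposal \emph{after} $\varepsilon$ and all the bumps have been fixed, which keeps the correction's contribution to $\|\nabla g\|_\infty$ below $\varepsilon$. Everything else — that the McShane extension and its truncation are $1$-Lipschitz, and the elementary convolution estimates — is entirely standard.
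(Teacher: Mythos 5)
Your proof is correct and follows essentially the same route as the paper's: a McShane-type $1$-Lipschitz extension, smoothing by convolution, and a small bump-function correction supported in disjoint balls around the points of $A$, with the smoothing parameter chosen last so that the correction's gradient contribution stays below $\varepsilon$. The only notable difference is your truncation of the extension to $[-\|f\|_\infty,\|f\|_\infty]$, which cleanly justifies the bound $\|g\|_\infty\le\|f\|_\infty+\delta$ (a point the paper's version leaves implicit).
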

\begin{proof}Find $\delta > 0$ such that the balls $\{B(a,2\delta)\setsep a\in A\}$ are pairwise disjoint. Fix some even Lipschitz function $\tau\in\C^1(\er)$ with $\tau(0) = 1$, $\|\tau\|_{\infty}\leq 1$ and $\{x\setsep \tau(x) \neq 0\}\subset (-\delta,\delta)$; e.g.
\[
\tau(x) = \begin{cases}
e^{-\tfrac{1}{\delta^2-x^2} + \tfrac{1}{\delta^2}} & |x| < \delta,\\
0 & \text{otherwise}.
\end{cases}
\]
Let $K$ be such that $\tau$ is $K$-Lipschitz and $K > 1$.

We may assume that $0\in A$ and $f(0) = 0$. First, we extend $f$ to a 1-Lipchitz function defined on $\er^n$; see e.g. \cite[Lemma 7.39]{johHaj}. We call this extension again $f$. Now, we find a 1-Lipschitz $\tilde{g}\in\C^1(\er^n)$ with $\|f-\tilde{g}\|_\infty < \varepsilon/2K$; e.g. using the standard integral convolution \cite[Lemma 7.1]{johHaj}.

For $a\in A$ define $\phi_a :\er^n\to\er$ by $\phi_a(x)= \big(f(a) - \tilde{g}(a)\big)\tau(\|x-a\|)$, $x\in \er^n$. Then $h:=\sum_{a\in A} \phi_a$ is a well-defined $\varepsilon/2$-Lipschitz function such that $\|h\|_\infty\leq \varepsilon/2K$ and $h(a) = f(a) - \tilde{g}(a)$ for every $a\in A$. Moreover, since on a Hilbert space the norm is smooth everywhere except 0 and since the function $\tau$ is even, it is easy to observe that $h\in\C^1(\er^n)$. It remains to put $g:=\tilde{g} + h$. Then we have $g\in\C^1(\er^n)$, $\|g\|_\infty < \|f\|_\infty + \varepsilon$ and $g$ is $(1+\varepsilon/2)$-Lipschitz; hence, $\|g\|^1_\infty < \max\{\|f\|_\infty, 1\} + \varepsilon$.
\end{proof}

\begin{remark}Note that it follows from Lemma \ref{l:rozsir} that whenever $A$ is a finite set in $[0,1]^n$ and $f$ a 1-Lipschitz function on $A$ with $f(0)=0$, there is $g\in\C^1(\er^n)$ with $\|g\|^1_\infty\leq \sqrt{n} + 1$. Therefore, by \cite[Theorem 1]{fef}, there is a linear extension operator $T:\Lip_0(A)\to \Lip_0(\er^n)$ with norm depending only on $n$; hence, $T^*|_{\F(\er^n)}$ is a projection from $\F(\er^n)$ onto $\F(A)$. Consequently, whenever we have $M\subset [0,1]^n$ and $A\subset M$ a finite set, $\F(A)$ is $C(n)$-complemented in $\F(M)$, where the constant $C(n)$ depends only on the dimension $n$. This gives another proof of the fact that $\F(M)$ has BAP whenever $M\subset [0,1]^n$ \cite[Proposition 2.3]{lanPer}.
\end{remark}

\begin{lemma}\label{l:freeSpaceSub}For every $n\in\en$, there is an isomorphism of $\F([0,1]^n)$ into $(\C^1([0,1]^n))^*$.
\end{lemma}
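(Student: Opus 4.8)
The plan is to realize $\F([0,1]^n)$ inside $(\C^1([0,1]^n))^*$ as (the restriction to $\F([0,1]^n)$ of) the adjoint of a natural bounded operator $\Phi\colon \C^1([0,1]^n)\to \Lip_0([0,1]^n)=\F([0,1]^n)^*$, and then to check that this adjoint is bounded below on the canonical copy of $\F([0,1]^n)$ in its bidual.

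First I would define $\Phi(F):= F-F(0)$ for $F\in\C^1([0,1]^n)$, meaning the function $x\mapsto F(x)-F(0)$. This lands in $\Lip_0([0,1]^n)$ since subtracting a constant kills the value at $0$ without changing the Lipschitz constant, and the mean value theorem on the convex set $[0,1]^n$ together with Cauchy--Schwarz gives $\|\Phi(F)\|_{\Lip}=\|F\|_{\Lip}\le\sup_x\|\nabla F(x)\|_2\le\sqrt n\,\|F\|^1_\infty$. Hence $\Phi$ is bounded with $\|\Phi\|\le\sqrt n$. Passing to the adjoint $\Phi^*\colon \F([0,1]^n)^{**}\to(\C^1([0,1]^n))^*$ and restricting to $\F([0,1]^n)\subset\F([0,1]^n)^{**}$ yields a bounded operator $\iota\colon\F([0,1]^n)\to(\C^1([0,1]^n))^*$ with $\|\iota\|\le\sqrt n$; explicitly $\langle\iota(\mu),F\rangle=\langle\mu,F-F(0)\rangle$ for $F\in\C^1([0,1]^n)$.

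The crux is the lower estimate $\|\iota(\mu)\|\ge\tfrac{1}{\sqrt n+1}\|\mu\|$. By continuity of $\iota$ it suffices to prove this for $\mu$ in the dense subspace $\Span\{\delta_x\setsep x\in[0,1]^n\}$; fix such a $\mu$ and let $A$ be the finite set obtained by adjoining $0$ to its support. Given $\varepsilon>0$, pick $f\in\Lip_0([0,1]^n)$ with $\|f\|_{\Lip}\le1$ and $|\langle\mu,f\rangle|>\|\mu\|-\varepsilon$; note that $\langle\mu,f\rangle$ depends only on $f|_A$, and since $0\in A$ and $\operatorname{diam}[0,1]^n=\sqrt n$ we get $\|f|_A\|_\infty\le\sqrt n$. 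Now apply Lemma \ref{l:rozsir} (or the remark following it) to the $1$-Lipschitz function $f|_A$ on the finite set $A\subset\er^n$: there is $g\in\C^1(\er^n)$ extending $f|_A$ with $\|g\|^1_\infty\le\sqrt n+1+\varepsilon$. Its restriction $G:=g|_{[0,1]^n}$ satisfies $G\in\C^1([0,1]^n)$, $G(0)=f(0)=0$, $G|_A=f|_A$ and $\|G\|^1_\infty\le\sqrt n+1+\varepsilon$. Plugging $F:=G/\|G\|^1_\infty$ into the supremum defining $\|\iota(\mu)\|$ and using that $\mu$ is supported on $A$ (so $\langle\mu,G-G(0)\rangle=\langle\mu,G\rangle=\langle\mu,f\rangle$) gives $\|\iota(\mu)\|\ge\tfrac{|\langle\mu,f\rangle|}{\sqrt n+1+\varepsilon}>\tfrac{\|\mu\|-\varepsilon}{\sqrt n+1+\varepsilon}$; letting $\varepsilon\to0$ proves the bound for finitely supported $\mu$, and it then extends to all of $\F([0,1]^n)$ by continuity. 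In particular $\iota$ is injective with closed range, hence an isomorphism of $\F([0,1]^n)$ onto a subspace of $(\C^1([0,1]^n))^*$, as claimed.

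I expect the only real work to be the lower bound, and within it the single nontrivial ingredient is the $\C^1$-extension with dimensional control furnished by Lemma \ref{l:rozsir}; everything else — that $\C^1$ functions on the cube are Lipschitz with constant at most $\sqrt n\,\|\cdot\|^1_\infty$, the adjoint/bidual bookkeeping, and the reduction to finitely supported elements — is routine.
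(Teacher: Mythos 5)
Your proposal is correct and follows essentially the same route as the paper: both arguments reduce to comparing the Lipschitz norm and the $\|\cdot\|^1_\infty$-dual norm on $\Span\{\delta_x\setsep x\in[0,1]^n\}$, with the upper bound coming from the mean value theorem (a $\C^1$ function on the cube is $\sqrt{n}\,\|\cdot\|^1_\infty$-Lipschitz) and the crucial lower bound coming from the $\C^1$-extension Lemma \ref{l:rozsir} applied to the restriction of a norming Lipschitz function to the finite support. The only cosmetic difference is that you package the embedding as the restricted adjoint of $F\mapsto F-F(0)$, whereas the paper works with the complemented hyperplane $Y=\{f\in\C^1([0,1]^n)\setsep f(0)=0\}$ and defines the map directly on the evaluation functionals.
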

\begin{proof}Put $Y = \{f\in\C^1([0,1]^n)\setsep f(0) = 0\}$. Then $Y$ is a closed subspace of codimension 1; hence, it is complemented and $Y^*$ is isomorphic to a subspace of $(\C^1([0,1]^n))^*$. For every $x\in [0,1]^n$ we define $T(\delta_x)\in Y^*$ by $T(\delta_x)(f) := f(x)$, $f\in Y$. Extend $T$ linearly to the set $\Span\{\delta_x\setsep x\in[0,1]^n\}$. Now, it is enough to verify that $T$ is an isomorphism into $Y^*$.

Fix an element $\mu\in \Span\{\delta_x\setsep x\in[0,1]^n\}$. There are $k\in\en$, $\alpha\in\er^k$ and $x_1\ldots,x_k\in[0,1]^n$ with $\mu = \sum_{i=1}^k \alpha(i)\delta_{x_i}$. We have to find constants $C > 0$ and $D > 0$ with 
\[
\begin{split}
C \sup \left\{\left|\sum_{i=1}^k \alpha(i) f(x_i)\right|\setsep \|f\|^1_\infty \leq 1, f(0)=0\right\}& \leq \sup \left\{\left|\sum_{i=1}^k \alpha(i) f(x_i)\right|\setsep \|f\|_{\operatorname{Lip}}\leq 1, f(0) = 0\right\}\\
& \leq D \sup \left\{\left|\sum_{i=1}^k \alpha(i) f(x_i)\right|\setsep \|f\|^1_\infty\leq 1, f(0)=0\right\}.
\end{split}
\]
The existence of constant $C$ follows from the basic fact that every function with total differential bounded by $K$ is $K$-Lipschitz; see \cite[Theorem 9.19]{rudin}; hence, we may put $C = 1/\sqrt{n}$. The existence of $D$ follows from Lemma \ref{l:rozsir}, which gives $D = \sqrt{n}$.
\end{proof}

\begin{proof}[Proof of Theorem \ref{t:c0}]By \cite[Corollary 3.3]{kaufmann-published}, $\F(\er^n)$ is isomorphic to $\F([0,1]^n)$. Hence, $\F(\er^n)$ is weakly sequentially complete by Theorem \ref{t:wscC1} and Lemma \ref{l:freeSpaceSub}. Finally, using the fact mentioned in Section \ref{s:2} that $\F(M)$ is isometric to a subspace of $\F(\er^n)$, we see that $\F(M)$ is weakly sequentially complete. Consequently, $c_0$ does not embed isomorphically into $\F(M)$ because, as it is well known and easy to prove, $c_0$ is not weakly sequentially complete.
\end{proof}

\section{Open problems}

As it was mentioned in Section \ref{s:c0}, if $M$ contains a bi-Lipschitz copy of $c_0$, then $\F(M)$ is a universal separable Banach space. Hence, we have quite a rich family of universal separable Banach spaces. By Theorem \ref{t:structure}, they are all different from $\C(K)$ spaces and from the Gurari\u{\i} space. One example is Pe{\l}czy{\'n}ski's universal basis space $\mathbb{P}$ (which is unique up to isomorphism). This space is isomorphic to $\F(\mathbb{P})$, see \cite[p. 139]{gk}. Another example is the Holmes space, i.e., the Lipschitz-free space over the Urysohn universal metric space.
By \cite[Theorem 4.2]{fonWoj}, the Holmes space is not isomorphic to $\mathbb{P}$. By \cite[Theorem 5]{dutFer}, $\F(c_0)$ is isomorphic to each $\F(\C(K))$. It could be of some interest to find out what isomorphic types of universal Banach spaces  we are able to get using the Lipschitz-free construction. For example, the following seems to be open.

\begin{question}Is $\F(c_0)$ isomorphic to the Holmes space or to $\mathbb{P}$?
\end{question}

In the light of Theorem \ref{t:c0} it is also natural to ask the following.

\begin{question}Is it true that $c_0\hookrightarrow \F(\ell_2)$?
\end{question}

Note that $c_0$ does not bi-Lipschitz embed into $\ell_p$ ($1\leq p < \infty$), see e.g. \cite[p. 169]{bl}. Hence, the negative answer to the above question would be a partial progress towards the answer to Question \ref{q:1}. Similarly, we do not know the answer to the following question.

\begin{question}Is it true that $c_0\hookrightarrow \F(\ell_1)$?
\end{question}

%
%
%

\subsection*{Acknowledgements}
We wish to thank G. Lancien and A. Proch\'azka for the valuable observation contained in Remark \ref{r:LanProch}.
M. C\'uth was supported by Warsaw Center of Matemathics and Computer Science (KNOW--MNSzW). M. Doucha was supported by funds allocated to the implementation of the international co-funded project in the years 2014-2018, 3038/7.PR/2014/2, and by the EU grant PCOFUND-GA-2012-600415. P. Wojtaszczyk was supported by Polish NCN grant UMO-2011/03/B/ST1/04902.

\def\cprime{$'$} \def\cprime{$'$}

\end{document}